\newcommand{\R}{\mathbb{R}}
\newcommand{\Ss}{\mathbb{S}}
\newcommand{\Z}{\mathbb{Z}}
\newcommand{\C}{\mathbb{C}}
\newcommand{\N}{\mathbb{N}}
\newcommand{\Hh}{\mathbb{H}}
\newtheorem{theorem}{Theorem}
\newtheorem{problem}{Problem}
\newtheorem{defn}{Definition}
\newtheorem{proposition}{Proposition}
\newtheorem{cor}{Corollary}
\newtheorem{lemma}{Lemma}
\author{Yannick Guedes Bonthonneau}
\title[Extending a result of Chen et al.]{Extending a result of Chen, Erchenko and Gogolev.}
\begin{document}

\begin{abstract}
In a recent paper \cite{Chen-Erchenko-Gogolev}, Chen, Erchenko and Gogolev have proven that if a Riemannian manifold with boundary has hyperbolic geodesic trapped set, then it can be embedded into a compact manifold whose geodesic flow is Anosov. They have to introduce some assumptions that we discuss here. We explain how some can be removed, obtaining in particular a result applicable to all reasonable 3 dimensional examples.
\end{abstract}

\maketitle

In all that follows, $(M,g)$ will denote a smooth compact Riemannian manifold with smooth boundary $\partial M$, of dimension $n$. The geodesic flow is defined on a subset of $S M \times\R$, and we will always assume that the set of points $v\in SM$ whose geodesic trajectory never encounters $\partial SM$ is a hyperbolic set. As in \cite{Chen-Erchenko-Gogolev}, we consider 
\begin{problem}\label{probA}
Can $M$ be isometrically embedded as an open set of a compact manifold $(N,g')$ without boundary, so that the geodesic flow of $N$ is Anosov?
\end{problem}

Since metrics whose geodesic flow is Anosov form the $C^2$ interior of the metrics that do not have conjugate points \cite{Ruggiero-1991}, certainly $M$ must not have any conjugate point, so we will make this assumption from now on. 

It is a standard assumption in the context of inverse problems for manifolds with boundary, that the boundary is strictly convex, i.e that the second fundamental form 
\begin{equation}\label{eq:convex-boundary}
\mathrm{II}_{\partial M} > 0. 
\end{equation}
This ensures that the Riemannian distance function for points that are close to each other on the boundary is realized by small geodesics that only touch the boundary at their endpoints. Thus, the set of points whose geodesic trajectory remains inside $M$ for all times does not meet $\partial M$. In practice, this simplifies many computations so we will assume \eqref{eq:convex-boundary} holds. 

\begin{defn}
Let $M$ be a compact manifold with non empty boundary, with hyperbolic trapped set, no conjugate points, and strictly convex boundary. We say that it is \emph{an Anosov manifold with boundary}.
\end{defn}

Let us recall the result of \cite{Chen-Erchenko-Gogolev}
\begin{theorem}[Theorem A, \cite{Chen-Erchenko-Gogolev}]\label{thm:1}
Let $M$ be an Anosov manifold with boundary, and furthermore assume that the boundary components of $M$ are topological spheres, then Problem \ref{probA} has a positive solution.
\end{theorem}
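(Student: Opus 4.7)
The plan is to construct $(N,g')$ by thickening $M$ slightly, then capping each boundary component by an $n$-disk carrying a metric of large negative curvature, and finally verifying that the glued metric has Anosov geodesic flow.

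First I would enlarge $(M,g)$ to a slightly bigger manifold $(M_1,g_1)\supset M$ with strictly convex boundary. Because $\mathrm{II}_{\partial M}>0$, the normal exponential map is an immersion on a neighborhood of $\partial M$ from both sides; by solving the Riccati equation for the shape operator along outgoing normal geodesics, one obtains a collar in which the level sets of the distance-to-$\partial M$ function remain strictly convex. On this collar the metric $g$ can be extended smoothly with no new conjugate points, and the hyperbolic trapped set, living in the interior of $SM$, is untouched. So one may assume $M$ is already the interior of a slightly larger Anosov manifold with boundary whose boundary components are still topological spheres $\Sigma_1,\dots,\Sigma_k$.

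Next, for each $\Sigma_i\cong S^{n-1}$, I would glue in a compact cap $C_i$ diffeomorphic to an $n$-disk, carrying a warped-product metric $dr^2+\varphi_i(r)^2\,h_i$ near the seam, where $h_i$ is the induced metric on $\Sigma_i$ and $\varphi_i$ is chosen so that the metric and its first few derivatives match across the gluing, in particular so that the second fundamental form of $\Sigma_i$ agrees on the two sides. The warping $\varphi_i$ is then bent sharply to produce very negative sectional curvatures throughout $C_i$. Using Rauch/Bishop comparison, I would check that every geodesic entering $C_i$ has no conjugate points and exits back into $M$ after a uniformly bounded time.

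The main step is to verify that the resulting closed Riemannian manifold $(N,g')$ has Anosov geodesic flow. Every orbit of the geodesic flow on $SN$ either remains in $SM$ for all times, in which case it accumulates on the already-hyperbolic trapped set $K$ of $M$, or it alternates between $SM$ and the caps, with uniformly bounded sojourn times inside each cap. Starting from the invariant stable/unstable cone field defined near $K$, I would propagate this cone field by the flow through the caps via the Riccati equation: very negative curvature forces strict cone contraction in forward time for the unstable cones (and symmetrically for stable), while in $M$ the existing hyperbolic structure does the job. Combining these estimates with a Ma\~n\'e-type cone criterion yields a continuous $d\phi_t$-invariant hyperbolic splitting on all of $SN$.

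The hard part will be the analysis in the gluing collars where the metric is not exactly warped and the sectional curvatures interpolate between those of $g$ on $\partial M$ and the strongly negative values inside the caps: there one must choose $\varphi_i$ and the interpolation carefully enough that the Riccati comparison still rules out conjugate points and preserves cone invariance across the seam, rather than only in the interiors. The hypothesis that the boundary components are spheres is used decisively to allow a radial warped-product filling; for non-spherical boundary components this direct disk-capping fails, which is exactly the restriction the present paper will relax.
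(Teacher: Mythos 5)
Your third step (propagating cones through a region of very negative curvature with bounded sojourn times, combined with the hyperbolicity of the trapped set inside $SM$) is in the right spirit and corresponds to the Jacobi-field analysis of \cite{Chen-Erchenko-Gogolev}. The construction it is applied to, however, is wrong: you cannot cap the boundary spheres with disks. The piece $N\setminus M$ must have \emph{concave} boundary (since $\partial M$ is strictly convex as seen from $M$), and a negatively curved disk with concave boundary does not exist. In dimension $2$ this is immediate from Gauss--Bonnet: $\int_{C}K+\int_{\partial C}k_g=2\pi$, so $K<0$ forces the boundary to be convex somewhere from inside the cap. More decisively, in any dimension the closed manifold you would obtain has finite fundamental group --- indeed, if one boundary component of $M$ is a sphere and $n\geq 3$, then $M$ is diffeomorphic to a ball (Theorem \ref{thm:special-cases}), so disk-capping yields $N\simeq \Ss^n$ --- and no closed simply connected manifold carries an Anosov geodesic flow (Anosov implies no conjugate points, universal cover $\R^n$, and a fundamental group of exponential growth). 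There is also a local problem with your warped-product cap: a metric $dr^2+\varphi(r)^2h_i$ on a disk is smooth at the centre only if $h_i$ is the round metric and $\varphi(0)=0$, $\varphi'(0)=1$, which is incompatible both with matching the induced metric of $\partial M$ at the seam and with making the curvature uniformly very negative throughout the cap.

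The actual argument replaces the disk by a topologically nontrivial cap: one takes a closed hyperbolic $n$-manifold $Y$, removes a metric ball $B$ (whose boundary is a convex sphere in $Y$, hence concave from the side of $Y\setminus B$), and glues $M$ into the hole after extending its metric on a collar so as to become exactly hyperbolic of curvature $-\kappa^2$ near the new boundary --- this is the extension of Lemma \ref{lemma:extension}, with curvature bounded above by a fixed $C_0$ everywhere and below $-\kappa^2/2$ outside a collar of width $1/\sqrt{\kappa}$. The resulting $N$ is diffeomorphic to $Y$, has infinite hyperbolic fundamental group, and the Jacobi-field/cone estimates you sketch then go through because the short collar of possibly positive curvature is compensated by a travel time of length at least $1$ in the region of curvature $\leq-\kappa^2/2$. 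The hypothesis that the boundary components are spheres is used not to permit a radial filling of a disk, but to guarantee that a closed hyperbolic manifold contains an embedded ball whose boundary can be matched with $\partial M$; this is precisely the step that fails for other boundary topologies and that the present paper addresses.
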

Since the only connected compact manifold in one dimension is a sphere, this theorem in particular solves completely the case of surfaces. 

In higher dimension, the assumption that the boundary components are spheres seemed very stringent, the question being whether there exists any example where $M$ is not a topological ball. In a second version of their paper, the authors in \cite{Chen-Erchenko-Gogolev} claim that their argument also applies to the case $\partial M \simeq \mathbb{S}^1 \times \mathbb{S}^{n-2}$. 

The arguments of \cite{Chen-Erchenko-Gogolev} were divided into three parts. The first was to observe that when boundaries are spheres, one can always find a manifold with concave boundary, and constant curvature $-1$ that could play the role of $N\setminus M$ . In the second one, they constructed an extension of the metric of $M$ on a collar near the boundary, that could be glued with a constant curvature metric. In the last part, using a delicate analysis of Jacobi fields, they obtained hyperbolicity of the geodesic flow of the whole closed manifold. 

In this paper, we will give a more general version of the second step in \cite{Chen-Erchenko-Gogolev}, and discuss at length the first step. We will also observe that the arguments in third step are very robust, and extend very generally. \\

Our statement that extends step 2 of \cite{Chen-Erchenko-Gogolev} to a more abstract setting is the following
\begin{theorem}\label{thm:conformally-compact}
Let $M$ be an Anosov manifold with boundary. Then one can embed isometrically $M\subset N$ into a conformally compact complete $n$-manifold $N$ whose interior has uniformly hyperbolic geodesic flow, negative curvature at infinity, asymptotically constant, and no conjugate points. Also, $N$ is diffeomorphic to $\mathring{M}$. \\
\end{theorem}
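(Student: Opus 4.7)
The plan is to attach a half-infinite collar to $M$ along $\partial M$ carrying a warped-product-type metric that is asymptotically hyperbolic. Work in Fermi normal coordinates for $\partial M$: on a tubular neighbourhood of $\partial M$ inside $M$, write $g = ds^2 + h_s$ with $s\in [0,\varepsilon)$ the inward distance to $\partial M$, and introduce the associated shape operator $A_s := \tfrac{1}{2}\, h_s^{-1}\partial_s h_s$. Strict convexity of $\partial M$ is equivalent to $A_0$ being negative definite. Extend $s$ to $(-\infty,\varepsilon)$; the attached piece $(-\infty,0]_s \times \partial M$ will provide the asymptotically hyperbolic end, with $\rho = e^s$ as boundary defining function at infinity.

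Fix a metric $h_\infty$ on $\partial M$ and prescribe $\tilde h_s = e^{-2s} h_\infty$ for $s \leq -T$, so that on that region the metric is the warped product $ds^2 + e^{-2s} h_\infty$ whose shape operator equals $-I$. In the transition region $s \in [-T,0]$, construct $\tilde h_s$ using a Whitney/Borel extension to match the full jet of $h_s$ at $s=0$, then adjust it inside the convex cone of negative definite symmetric bilinear forms so that the shape operator $\tilde A_s$ depends monotonically on $s$ along a path from $A_0$ to $-I$. Strict convexity $A_0 < 0$ guarantees the existence of such an interpolation, provided $T$ is chosen large enough. Setting $\rho = e^s$ yields
\[
\rho^2(ds^2 + \tilde h_s) = d\rho^2 + \rho^2 \tilde h_s \longrightarrow d\rho^2 + h_\infty \quad \text{as } \rho \to 0,
\]
so $N$ is conformally compact and diffeomorphic to $\mathring M$. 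A direct computation of sectional curvatures of $ds^2 + e^{-2s} h_\infty$ in the pure-funnel region yields asymptotically constant sectional curvature $-1$.

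For absence of conjugate points, geodesic escape and hyperbolicity of the trapped set, the key fact is that $\tilde A_s$ is negative definite for all $s \leq 0$. In Fermi coordinates any unit-speed geodesic satisfies $\ddot s = (h_s \tilde A_s)(\dot x_{\mathrm{tang}}, \dot x_{\mathrm{tang}}) \leq 0$, so $\dot s$ is nonincreasing along the geodesic in $\{s \leq 0\}$. Any geodesic crossing $s = 0$ with $\dot s < 0$ then escapes monotonically to $s = -\infty$; the tangential case ($\dot s = 0$ with $\dot x_{\mathrm{tang}} \neq 0$) is handled using strict negativity of $\tilde A_s$. Hence the trapped set of the geodesic flow of $N$ coincides with the one of $M$, so it is still compact and uniformly hyperbolic by assumption. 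Absence of conjugate points in the collar follows from the Riccati equation for $\tilde A_s$ along normal directions together with a standard cone-field argument on transverse Jacobi fields, both exploiting the monotonicity of $\tilde A_s$.

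The main obstacle is the construction of $\tilde h_s$ in the transition region, which must simultaneously match the infinite jet at $s=0$, realise a monotone path inside the open cone of negative definite symmetric operators, and control Jacobi fields along \emph{all} geodesics entering the collar, not merely the radial ones. The Riccati/cone-field analysis is the technical heart of the proof, and can be closed by choosing $T$ so large that the $C^2$-oscillation of $\tilde A_s$ is arbitrarily small, while using the strict negativity of $\tilde A_s$ to secure the positive lower bounds in the resulting variational inequalities.
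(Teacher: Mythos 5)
Your overall strategy is the same as the paper's: attach a half-infinite warped-product funnel along $\partial M$, use strict convexity of the slices to show that every geodesic leaving $M$ escapes (so the trapped set, hence its hyperbolicity, is unchanged), and compactify conformally via $\rho=e^{s}$. The construction of the metric and the conformal compactness are fine. The gap is in the dynamical part, and it is not merely a missing detail: the quantitative structure of your extension is wrong for ruling out conjugate points.

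First, you fix the asymptotic curvature at $-1$. A Jacobi field $J$ that vanishes inside $M$ and then exits into the collar may do so with logarithmic derivative $\mu_J=\|J\|'/\|J\|$ as low as $-Q_M$, where $Q_M$ is a constant determined by the geometry of $M$ and can be arbitrarily large. By Riccati comparison, in a region of curvature bounded below by $-a^2$ with $a<Q_M$, a field with $\mu_J<-a$ can blow down and vanish in finite time (compare $\sinh(a(\tau_1-\tau))$ in constant curvature $-a^2$). Since the geodesic spends infinite time in your funnel, this produces a second zero of $J$, i.e.\ a conjugate point. This is exactly why the paper's Lemma 1 makes the curvature at infinity equal to $-\kappa^2$ with $\kappa$ large depending on $M$: one needs $\kappa/\sqrt2>Q_M$ so that the comparison lemma pushes $\mu_J$ \emph{up} to $\approx\kappa/\sqrt2$ rather than down to $-\infty$. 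Second, your transition region goes in the wrong direction: you take $T$ large so that $\tilde A_s$ oscillates little, but near $s=0$ the jet-matching forces the sectional curvatures to agree with those of $M$, which may be positive and as large as some fixed $C_0$; controlling the shape operator does not control them. A long stretch of curvature up to $C_0$ is precisely where conjugate points form. The paper does the opposite: it keeps the curvature bounded above by a fixed $C_0$ everywhere but shrinks the region where it is not yet $\leq-\kappa^2/2$ to width $1/\sqrt{\kappa}$, so that the travel time through the ``bad'' patch is $o(1)$ and $\mu_J$ barely changes there before the very negative curvature takes over. Your closing step (``Riccati/cone-field analysis'') is acknowledged as the heart of the proof but not carried out, and with a fixed funnel curvature and a long uncontrolled transition it cannot be closed as stated. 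To repair the argument you need the two quantitative features of the paper's construction: curvature $\leq C_0$ globally with $C_0$ independent of $\kappa$, and curvature $\leq-\kappa^2/2$ outside an arbitrarily thin collar, with $\kappa$ taken large depending on $M$.
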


Let us now turn to our main result concerning step 1. We prove, that indeed, as suspected by several colleagues when \cite{Chen-Erchenko-Gogolev} first appeared as a preprint, simple topology of the boundary is a great constrain on the global geometry of $M$. 
\begin{theorem}\label{thm:special-cases}
Let $M$ be an Anosov manifold with boundary of dimension at least $3$
\begin{itemize} 
	\item If at least one boundary component is diffeomorphic to a sphere, then $M$ is diffeomorphic to a ball, and has no trapped set.
	\item If at least one boundary component is diffeomorphic to a $\mathbb{S}^{n-2}$ bundle over $\Ss^1$, then either the bundle is trivial, and $M$ is diffeomorphic to a solid torus or it is non oriented, and $M$ is diffeomorphic to the product of a M\"obius band and a disk. In both cases, it is a convex neighbourhood of a single closed geodesic. 
\end{itemize}
\end{theorem}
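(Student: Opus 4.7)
The plan is to build on Theorem~\ref{thm:conformally-compact}. I would embed $M \subset N$ isometrically into a complete manifold with no conjugate points, and apply the Cartan--Hadamard theorem in its no-conjugate-points form to conclude that the universal cover $\tilde N$ is diffeomorphic to $\R^n$ via $\exp_p$ at any point, and that any two points of $\tilde N$ are joined by a unique geodesic. Since $N$ deformation retracts onto $M$, the inclusion induces $\pi_1(M) = \pi_1(N)$ and the universal cover $\tilde M$ sits inside $\tilde N \cong \R^n$ as a closed codimension-zero submanifold, with $\partial \tilde M$ the lift of $\partial M$. The next structural fact I would establish is that $\tilde M$ is \emph{geodesically convex} in $\tilde N$: the strict convexity $\mathrm{II}_{\partial M} > 0$ yields local geodesic convexity at every boundary point, and in a complete simply connected manifold with no conjugate points this propagates globally thanks to uniqueness of geodesics. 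In particular $\tilde M$ is contractible.

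For the sphere case, fix a boundary component $S \simeq \Ss^{n-1}$. Since $n \geq 3$, $S$ is simply connected, so each connected component of the preimage of $S$ in $\tilde M$ is a smoothly embedded copy of $\Ss^{n-1}$ in $\tilde N \cong \R^n$. By the generalised Schoenflies theorem such a sphere bounds a closed ball in $\tilde N$, and strict convexity of $\partial \tilde M$ along this sphere forces $\tilde M$ to lie on the bounded side. Hence $\tilde M$ is compact, and contractibility then gives $\pi_1(M)=1$, so $M = \tilde M$ is a smooth ball. The emptiness of the trapped set follows: it is locally maximal under the strict convexity hypothesis, so a nonempty hyperbolic trapped set would contain a periodic orbit by the Anosov closing lemma; but a closed geodesic in $M$ lifts to a nonconstant closed geodesic in $\tilde N \cong \R^n$, which is impossible by uniqueness of geodesics.

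For the bundle case, write $S$ for the $\Ss^{n-2}$-bundle over $\Ss^1$ boundary component and let $\alpha \in \pi_1(S)$ be the class of the base $\Ss^1$. First I would show $\alpha$ is not null-homotopic in $M$: otherwise each lift of $S$ in $\tilde M$ would be a properly embedded convex separating hypersurface of $\R^n$ homeomorphic to $\R \times \Ss^{n-2}$ (or to $\R^2$ when $n=3$) on whose convex side $\tilde M$ sits, and the $\pi_1(M)$-periodic structure of such unbounded convex hypersurfaces inside the convex $\tilde M$ quickly rules out compactness of the quotient. Uniform hyperbolicity of the geodesic flow of $N$ then produces a closed geodesic $\gamma \subset M$ realising the free homotopy class of $\alpha$, and its lift $\tilde \gamma \subset \tilde M$ is a geodesic line invariant under a $\Z$-subgroup of $\pi_1(M)$. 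The final step is to show $\pi_1(M)=\Z$ and that $\tilde M$ is exactly the convex tube around $\tilde \gamma$: the lifts of $S$ form a $\Z$-periodic family of convex cylinders wrapping $\tilde \gamma$, and convexity of $\tilde M$ together with strict convexity of $\partial \tilde M$ should force $\partial \tilde M$ to consist of a single such cylinder and $\tilde M$ to be its convex interior. Quotienting by $\langle \gamma \rangle \cong \Z$, $M$ is a $\mathbb{D}^{n-1}$-bundle over $\Ss^1$; such bundles are classified up to isomorphism by $\pi_0(\mathrm{Diff}(\mathbb{D}^{n-1})) = \Z/2\Z$, giving the solid torus in the trivial case and the non-orientable M\"obius band $\times \mathbb{D}^{n-2}$ in the twisted case.

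The main obstacle I expect is this tube identification in the bundle case. In nonpositive curvature it is the standard flat-strip / tubular neighbourhood theorem, but under the mere no-conjugate-points hypothesis those tools are unavailable, and one must argue directly from convexity and the strict positivity of the second fundamental form of the lifted boundary, which should control how far the convex domain can extend transversally to $\tilde \gamma$. Secondary technicalities are the smooth Schoenflies theorem in dimension four (which can be handled either by standard smoothing theory, or by exhibiting the ball directly from $\pi_1(M)=1$ and no conjugate points by showing the exponential map from any interior point of $M$ is a diffeomorphism onto $M$) and the verification of the orientation dichotomy between the two $\mathbb{D}^{n-1}$-bundle classes.
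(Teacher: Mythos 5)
Your first bullet is essentially sound and close in spirit to the paper, but note two points. The claim that strict convexity ``forces $\widetilde M$ to lie on the bounded side'' of the lifted sphere is not justified as stated, and the appeal to generalised Schoenflies (with its smooth problem in dimension $4$) is avoidable: the paper's mechanism is to radially project $\partial\widetilde M$ from an interior point $x_0$. Geodesic convexity of $\widetilde M$ plus strict convexity of the boundary make this projection a homeomorphism of $\partial\widetilde M$ onto an \emph{open} subset of $\Ss^{n-1}$; a compact component is then open and closed, hence the whole sphere, and the union of the radial segments exhibits $\widetilde M$ as a ball directly. This is exactly your fallback via the exponential map, and you should make it the primary argument. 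Emptiness of the trapped set is then immediate (a trapped geodesic would be a complete geodesic confined to the compact $\widetilde M$, impossible without conjugate points), with no need for the closing lemma.

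The second bullet contains a genuine gap, which you yourself flag: the ``tube identification,'' i.e.\ showing that $\pi_1(M)\simeq\Z$, that $\partial\widetilde M$ has a single component, and that $\widetilde M$ is precisely a tube around one axis. ``Convexity \dots should force'' is not an argument, and you are right that flat-strip techniques are unavailable here. The paper's substitute is the visual compactification $B^n=\widetilde N\cup\Ss^{n-1}$ together with the limit set $\Lambda$ (the complement of the radial image of $\partial\widetilde M$): hyperbolicity of the trapped set makes every nontrivial deck transformation loxodromic with exactly two fixed points on $\Ss^{n-1}$, lying in $\Lambda$. One first shows $\imath_\ast(\pi_1(P))\simeq\Z=\langle\gamma\rangle$ (using non-compactness of $\widetilde P$ and the absence of $\Z^2$ in $\pi_1(M)$, which handles the torus case $n=3$); then, since $P$ is compact, $\widetilde P$ stays within bounded distance of the axis $c$ of $\gamma$, so its radial image is open and closed in $\Ss^{n-1}\setminus\{c(\pm\infty)\}$, hence equals it. This yields $\partial\widetilde M=\widetilde P$, $\Lambda=\{c(\pm\infty)\}$, and $\pi_1(M)=\langle\gamma\rangle$ because every element must fix both points of $\Lambda$ and hence translate along $c$. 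Only then do Fermi coordinates along $c$, convexity (star-shaped slices) and compactness of the slices identify $\widetilde M$ as a disk bundle over $\R$ and $M$ as its quotient, with the two cases distinguished by $\det A_\gamma=\pm1$. Without some version of this compactification/limit-set step, your outline does not close.
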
 
As observed by the authors of \cite{Chen-Erchenko-Gogolev}, in the second case, one can use the residual finitess of the $\pi_1$ of hyperbolic manifolds to embed $M$ in the finite cover of any given compact hyperbolic $n$-manifold. If one component of the boundary is a $\Ss^{n-p-1}$ bundle over a $p$-dimensional Anosov manifold, one could imagine that $M$ is diffeomorphic to the corresponding ball bundle ; except from some partial results, whether this is true or not has eluded the author.

Finally, we show that for 3 manifolds, the problem can also be completely solved. 
\begin{theorem}\label{thm:3D}
Let $M$ be an oriented Anosov 3-manifold with boundary. Then Problem \ref{probA} has a positive answer. 
\end{theorem}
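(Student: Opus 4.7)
The strategy is to reduce to the case of higher-genus boundary components using Theorem \ref{thm:special-cases}, and then to handle that remaining case by producing compact hyperbolic caps and invoking Theorem \ref{thm:conformally-compact} together with the (robust) third step of \cite{Chen-Erchenko-Gogolev}.

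First, I would split according to the topology of the boundary components. Since $M$ is orientable and three-dimensional, every component of $\partial M$ is a closed oriented surface $\Sigma_g$. Theorem \ref{thm:special-cases} tells me that if some component is $\Ss^2$, then $M$ is a ball, so Theorem \ref{thm:1} applies directly. If some component is an $\Ss^1$-bundle over $\Ss^1$, then by orientability that bundle is the trivial one (a $2$-torus), and Theorem \ref{thm:special-cases} forces $M$ to be a solid torus; in that situation the residual-finiteness argument noted in the introduction embeds $M$ as a tubular neighbourhood of a closed geodesic inside a finite cover of any prescribed compact hyperbolic $3$-manifold, which is already Anosov. It therefore remains to treat the case where every boundary component $\Sigma_{g_i}$ has genus $g_i \geq 2$.

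In this remaining case, the plan is to construct, for each $\Sigma_{g_i}$, a compact orientable hyperbolic $3$-manifold $X_i$ with totally geodesic boundary diffeomorphic to $\Sigma_{g_i}$, to play the role of $N \setminus M$ from \cite{Chen-Erchenko-Gogolev}. Such caps exist for every $g \geq 2$: one starts from any closed orientable hyperbolic $3$-manifold carrying an embedded totally geodesic surface of the prescribed genus (produced e.g.\ by arithmetic constructions), then cuts along that surface and keeps one side, orientability of $M$ allowing one to match orientations consistently at the gluing. With the caps at hand, Theorem \ref{thm:conformally-compact} lets me deform the metric of $M$ outside a collar of $\partial M$ so that it becomes asymptotically hyperbolic; truncating at a suitable equidistant hypersurface and gluing to $\sqcup_i X_i$ after a $C^\infty$ interpolation in the collar produces a closed oriented $3$-manifold $N$ into which $M$ embeds isometrically.

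Finally, I would verify that the geodesic flow on $N$ is Anosov. This is handled by the third step of \cite{Chen-Erchenko-Gogolev}, which the present author emphasises to be very robust: it only requires hyperbolicity of the trapped set inside $M$, absence of conjugate points and asymptotically constant negative curvature in the extension region, and sufficient proximity of the interpolation to the hyperbolic model. All of these are guaranteed by Theorem \ref{thm:conformally-compact} and by the hyperbolic caps just constructed. The main obstacle in the plan is thus the construction of the hyperbolic caps with prescribed boundary topology; a secondary subtlety is matching the induced metric at the gluing, but the flexibility provided by Theorem \ref{thm:conformally-compact} reduces that to purely topological existence of the caps.
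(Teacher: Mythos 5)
Your overall architecture matches the paper's: reduce via Theorem \ref{thm:special-cases} to the case where every boundary component has genus $\geq 2$, extend the metric as in Section \ref{sec:metric-extension} so that it becomes constant curvature $-\kappa^2$ in a collar, cap off each component with a compact hyperbolic $3$-manifold having totally geodesic boundary of the right genus (this is Fujii's theorem, the paper's Lemma \ref{lemma:Fujii}), and then run the Jacobi-field analysis of \cite{Chen-Erchenko-Gogolev}. You also correctly flag that the boundary hyperbolic structure must be the one realized by the cap, which Lemma \ref{lemma:extension} accommodates.

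However, there is a genuine gap at the gluing step, and it is precisely the point the paper identifies as the main difficulty. The dynamical argument forces $\kappa$ to be arbitrarily large, and the extended metric in the region $1\leq t\leq 2$ is $dt^2+\kappa^{-2}\cosh(\kappa t)^2 g_{\partial N_g}$, i.e.\ the rescaled hyperbolic collar metric in Fermi coordinates at distance $\tau=\kappa t\in[\kappa,2\kappa]$ from a totally geodesic surface. To glue this to the cap $N_g$ you therefore need the equidistant hypersurfaces of $\partial N_g$ inside $N_g$ to remain an embedded product $\partial N_g\times[0,2\kappa[$; equivalently, every geodesic arc of $N_g$ orthogonal to the boundary at both endpoints must have length greater than $4\kappa$. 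A cap produced by a one-off arithmetic or Fujii-type construction has some fixed shortest orthogeodesic, so its product collar has bounded width and the gluing fails once $\kappa$ exceeds that bound; "truncating at a suitable equidistant hypersurface'' is not available because beyond the collar the equidistant sets are no longer products and the metric is no longer of the warped form you need to match. The paper's resolution is Lemma \ref{lemma:good-N_g}: using Agol's theorem that fundamental groups of compact hyperbolic $3$-manifolds are LERF, one passes to finite covers of the double $DN_g$ separating $\pi_1(\Sigma)$ from the classes of the short doubled orthogeodesics, and reassembles copies of $N_g$ along the extra boundary components; a finite induction then removes all orthogeodesics of length $\leq 4\kappa$. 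Without this step (or a substitute producing caps with arbitrarily wide totally geodesic collars), your plan does not go through. A secondary, lesser point: the final Anosov verification is not a pure citation of the "robust third step''; the paper reruns the $\mu_J$ comparison argument to check no conjugate points and unbounded Jacobi fields across the repeated transitions between $SM$ and its complement, which you should at least sketch.
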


We also obtain:
\begin{cor}\label{cor:1}
If $(M,g)$ is an oriented Anosov 3-manifold with boundary, there exists a metric $g'$ on $M$, with curvature $-1$, such that $(M,g')$ is also Anosov with boundary, and is a convex neighbourhood of the convex core of a convex co-compact quotient of $\mathbb{H}^3$. 
\end{cor}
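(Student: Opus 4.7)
The strategy is to equip $\mathring M$ with a convex cocompact hyperbolic structure, and then realize $M$ itself as a suitable equidistant thickening of the resulting convex core. I would split into cases following Theorem~\ref{thm:special-cases}. If some boundary component of $M$ is a $2$-sphere, then $M\simeq B^3$, so take $g'$ to be the restriction of the hyperbolic metric to any round ball in $\Hh^3$ (the trivial Kleinian group has a point as convex core). If instead some component is an $\Ss^1$-bundle over $\Ss^1$, orientability forces $M\simeq D^2\times\Ss^1$, and $M$ is isometric to a sufficiently large strictly convex tubular neighbourhood of the unique closed geodesic in a cyclic loxodromic quotient $\Hh^3/\langle\gamma\rangle$, whose convex core is precisely that closed geodesic.

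In the remaining case, every boundary component of $M$ has genus $\ge 2$, and the plan is to apply Thurston's hyperbolization theorem for compact orientable Haken $3$-manifolds with incompressible higher-genus boundary. Its hypotheses would be verified as follows: orientability is given; for irreducibility, Theorem~\ref{thm:conformally-compact} provides a complete metric on $\mathring M$ with no conjugate points, so the universal cover is diffeomorphic to $\R^3$, whence $\pi_2(M)=0$ and irreducibility follows from the Sphere Theorem; for atoroidality, a $\Z^2\subset\pi_1(M)$ would produce two commuting non-trivial closed geodesics and hence a flat $2$-dimensional invariant subsystem inside the trapped set, contradicting uniform hyperbolicity. Hyperbolization then yields a complete hyperbolic structure $\mathring M\simeq\Hh^3/\Gamma$, and the absence of $\Z^2$ in $\pi_1$ rules out parabolic subgroups of $\Gamma$, so $\Gamma$ is convex cocompact with nonempty compact convex core $C(\Gamma)$.

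It remains to promote $\Hh^3/\Gamma$ into the statement of the corollary. Write $C_t := \{p\in\Hh^3/\Gamma : d(p,C(\Gamma))\le t\}$ for $t>0$; each $C_t$ is a compact convex submanifold of $\Hh^3/\Gamma$ with strictly convex equidistant boundary, and it is diffeomorphic to the standard compactification of $\Hh^3/\Gamma$ by its conformal boundary, which in turn is diffeomorphic to $M$. Transporting the hyperbolic metric via any such diffeomorphism $M\simeq C_t$ defines $g'$: by construction it has constant curvature $-1$, strictly convex boundary, and trapped set contained in $C(\Gamma)$, where uniform hyperbolicity of the geodesic flow is automatic. The main obstacles will be a clean derivation of atoroidality from the Anosov-with-boundary data alone (rather than importing it from the closed Anosov extension provided by Theorem~\ref{thm:3D}) and the explicit identification of $M$ with the equidistant neighbourhood $C_t$; both rely on the standard geometric topology of convex cocompact Kleinian manifolds.
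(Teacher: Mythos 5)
Your proposal is correct in outline but takes a genuinely different route from the paper. The paper does not hyperbolize $M$ directly: it first invokes the embedding of $(M,g)$ as a convex subset of a \emph{closed} Anosov $3$-manifold $(M',g)$ from Theorem \ref{thm:3D}, applies the geometrization theorem to the closed manifold $M'$ (asphericity plus the absence of $\Z^2$ in $\pi_1$ forces $M'$ to be hyperbolic, with metric $\tilde g$), and then establishes convex cocompactness of $\Gamma=\pi_1(M)$ \emph{dynamically}: a H\"older conjugation between the $g$- and $\tilde g$-geodesic flows is built on the universal cover from endpoints at infinity and horospheres, and the compactness of the $g$-trapped set $F/\Gamma\subset M$ is transported through this conjugation to compactness of the union of $\tilde g$-geodesics with endpoints in the limit set. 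Your route instead applies Thurston's hyperbolization for Haken manifolds with boundary directly to $M$; this buys logical independence from Theorem \ref{thm:3D} and from the gluing construction, at the price of invoking the harder relative hyperbolization theorem and of the details you yourself flag. Those details are all surmountable, and mostly already available in the paper: irreducibility and atoroidality follow from Section 2 ($\widetilde M$ is a convex subset of $\R^3$, and commuting loxodromics are powers of a common element, so there is no $\Z^2$ --- you do not need the ``flat invariant subsystem'' heuristic, which as written is not a proof); the absence of rank-one cusps is arranged by hyperbolizing the pared manifold $(M,\emptyset)$, legitimate precisely because there are no torus boundary components and no $\Z^2$ subgroups; the identification $M\cong C_t$ follows from uniqueness of the compact core of an irreducible $3$-manifold with the given interior; and since $\partial C(\Gamma)$ is a pleated surface, $\partial C_t$ is only $C^{1,1}$, so you must replace $C_t$ by a smooth strictly convex neighbourhood to meet the paper's definition of an Anosov manifold with boundary (the statement of the corollary only requires \emph{some} convex neighbourhood, so this is harmless).
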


If $(N,g)$ is a convex co-compact 3-manifold whose convex core admits a neighbourhood $V$ that has totally geodesic boundary, one can remove the complement of $V$, and take the double of $V$ along its boundary, to obtain a compact manifold with curvature $-1$ that has the convex core of $N$ as a subset. However, there exist convex co-compact 3-manifold for which one cannot find such a $V$, even modifying the metric. For example, the quotient $N=PSL_2(\C)/\Gamma$, where $\Gamma$ is a Schottky group, is the interior of a handlebody, and the double\footnote{to obtain a manifold with negative curvature, one has to glue along the boundary with a map that reverses orientation, not the identity.} of a handlebody does not support a metric of curvature $-1$. 

It seems that current knowledge of topology of Riemannian manifolds of dimension $\geq 4$ is not sufficient to settle the question in higher dimensions. In particular, since there exist compact manifolds of dimension $\geq 4$, that admit negatively curved metrics, but do not admit a structure of homogeneous space, there can be no equivalent of Corollary \ref{cor:1} in higher dimension. 

\textbf{Structure of the arguments.} We will rely on a good part of the argument in \cite{Chen-Erchenko-Gogolev}. Let us recall its gist. They first construct an extension of the metric near the boundary, whose curvature is bounded above uniformly, but becomes arbitrarily negative at an arbitrarily small distance of the boundary, and constant at a fixed distance.
\begin{figure}[h]
\def\svgwidth{0.5\columnwidth}
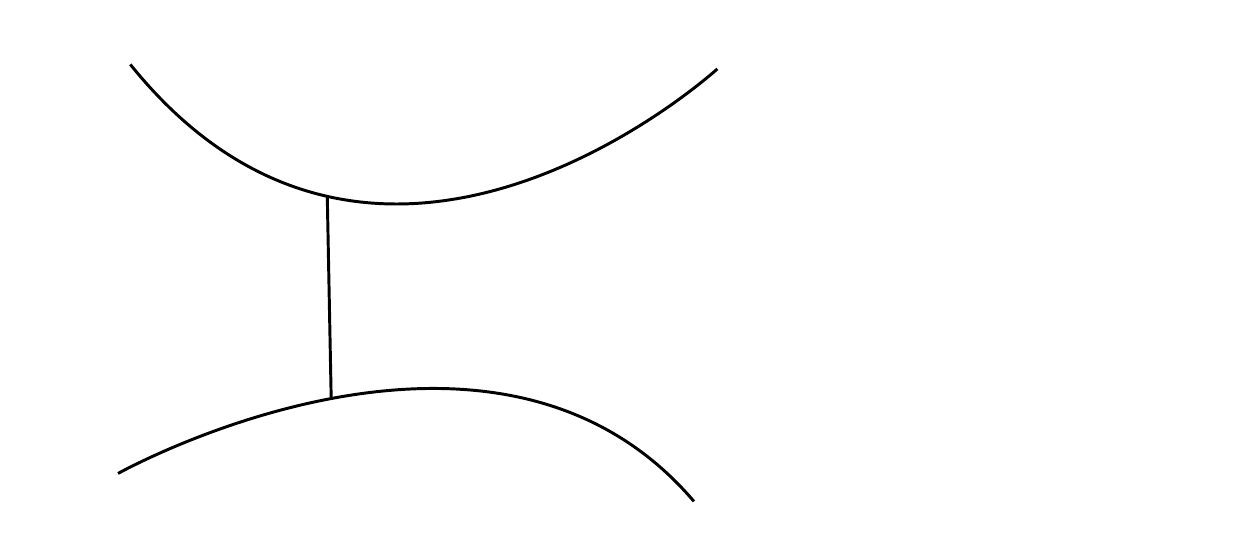
\caption{The extension of the metric near the boundary}
\end{figure}

The second part is to study the dynamics of Jacobi Fields, and prove that the passage of the geodesics through the patch of possibly positive curvature is compensated by the large negative curvature in the rest of the extension, so that the extension does not create conjugate points, and preserves the Axiom A property. 

The last part is to find a manifold of constant sectional curvature that can be glued to the extended manifold. 

We point out\footnote{This observation may be relevant in higher dimension. Indeed, starting from dimension 4, there exist manifolds supporting negatively curved metrics, but supporting no locally symmetric metric.} that in the arguments of the second part, it is not really important that the curvature becomes constant far from the boundary. What is crucial is that it is arbitrarily negative on the complement of an arbitrarily small patch near $\partial M$, and uniformly bounded above. If one were to build an extension satisfying only these two properties, the second part of the arguments would apply. Then if one glues to the extension a manifold with concave boundary and negative curvature, the rest of the arguments also apply. The remaining topological question is thus whether there exist concave manifold with negative curvature and prescribed  metric near boundary components.

\textbf{Organization of the paper.} We will first give a slightly different presentation of the construction of the metric extension, so that it applies to the most general case and prove Theorem \ref{thm:conformally-compact}. Next, we will discuss some topological results about the general problem, and finish the proof of Theorem \ref{thm:special-cases}. Finally, we will concentrate on the 3 dimensional case, and prove Theorem \ref{thm:3D}.

We happily thank C. Lecuire for providing the key argument for Theorem \ref{thm:3D}. We also warmly thank B. Petri and F. Paulin for many explanations. The author of this paper is not an expert in topology of hyperbolic 3 manifold, or higher dimensional topology for that matter. If such an expert would take interest in these questions, maybe significant further progress could be made. 

This work was elaborated with the support of Agence Nationale de la Recherche through the PRC grant ADYCT (ANR-20-CE40-0017).

\section{The extension problem}
\label{sec:metric-extension}

We will here present the constructive arguments of \cite{Chen-Erchenko-Gogolev} a little differently. We start by giving formulæ for the curvatures of a metric in coordinates adapted to an hypersurface. 

\subsection{Study of the curvatures in a slice situation}

In this section, we consider $S$ a compact manifold endowed with a family of Riemannian metrics $(g_t)_t$, and a function $f$ of the parameter $t$, that we will assume to lie in an interval $I\subset \R$. Both are assumed to be smooth, and we are chiefly interested in the sectional curvatures of the metrics on $I\times S$ given by 
\[
h= dt^2 + g_t,\qquad \tilde{h} = dt^2 + f(t)^2 g_t. 
\]
In what follows, we will think of $g_t$ as fixed, and will let $f$ vary to obtain some interesting properties. Quantities related to $\tilde{h}$ will be denoted with a ${}^\sim$. In \cite{Chen-Erchenko-Gogolev}, some very classical formulæ are recalled, but some computations have only been done in local coordinates using Christoffel coefficients; we will here try to give an intrinsic presentation, simplifying the proof of the first part of their statement. 

On the manifold $I\times S$, we denote by $T$ the vector field $\partial/\partial t$, and the letters $X,Y$ will denote vector fields tangent to $S$, that do not depend on $t$. Recall that the second fundamental form is $II =(1/2) \partial_t h$, and the Shape operator $A$ is defined by
\[
h(AX,Y) = II(X,Y). 
\]
Here, 
\[
g_t(\tilde{A}X,Y) = \frac{f'}{f} g_t(X,Y) + \frac{1}{2}\partial_t g_t(X,Y), 
\]
so that
\[
\tilde{A} = \frac{f'}{f} + A.
\]
We will assume that the slices $S_t =\{t\} \times S$ are strictly convex, i.e that $\tilde{A} > 0$. More precisely, we will assume that $A\geq 0$, and that $f'/f > 0$. We say that the slices are \emph{uniformly} strictly convex if we have $\tilde{A}> C >0$ for some global constant $C>0$. 

By $\sigma_{U,V}$, we denote the plane generated by vectors $U,V$.
\begin{proposition}
We have the following formulæ for the sectional curvatures of $\tilde{h}$, where $a\neq 0$, and $X,Y$ are orthonormal for $g_t$ at the point of computation
\begin{align*}
\tilde{K}(\sigma_{X,T}) &= -\frac{f''}{f} + K(\sigma_{X,T}) - 2 \frac{f'}{f} g_t(AX,X).  \\
\tilde{K}(\sigma_{X,Y}) &=  \frac{1}{f^2} K^{int}(\sigma_{X,Y}) + \left[ g_t(AX,Y)^2 - g_t(AX,X)g_t(AY,Y)\right] \\
						&\qquad - \left(\frac{f'}{f}\right)^2  - 2 \frac{f'}{f}\left[ g_t(AX,X)+ g_t(AY,Y)\right]\\
\tilde{K}(\sigma_{X+aT,Y})&= \frac{ f(t)^2 \tilde{K}(\sigma_{X,Y}) + a^2 \tilde{K}(\sigma_{T,Y}) + 2a h( R(X,Y)Y,T)}{f(t)^2 + a^2} .  \\
\end{align*}
\end{proposition}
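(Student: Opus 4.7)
My plan is to derive the three formulas from the Gauss-Codazzi-Ricci apparatus applied to the foliation of $(I\times S,\tilde h)$ by the slices $S_t=\{t\}\times S$, combined with the way slice quantities transform under the rescaling $g_t\mapsto f(t)^2 g_t$. Two observations make the bookkeeping clean. First, $T=\partial/\partial t$ satisfies $\tilde h(T,T)=1$, so $T$ is the unit normal in both $h$ and $\tilde h$. Second, $\tilde h|_{S_t}=f(t)^2 g_t$ differs from $g_t$ only by a slice-constant conformal factor, so the induced Levi-Civita connection on the slice is the same as that of $g_t$ and the intrinsic sectional curvature rescales as $\tilde K^{\mathrm{int}}=K^{\mathrm{int}}/f^2$. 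A direct computation from $\widetilde{\mathrm{II}}=\tfrac12\partial_t\tilde h$ gives $\widetilde{\mathrm{II}}(V,W)=f f'\,g_t(V,W)+f^2 g_t(AV,W)$, consistent with the shape-operator identity $\tilde A=A+(f'/f)\operatorname{Id}$ recalled above.

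For the plane $\sigma_{X,T}$, I would invoke the Riccati equation for the normal flow, $\tilde\nabla_T\tilde A+\tilde A^2=-\tilde R_T$ with $\tilde R_T X:=\tilde R(X,T)T$, and its analogue $\nabla_T A+A^2=-R_T$ for $h$. Since $(f'/f)\operatorname{Id}$ commutes with $A$, expanding $\tilde A=A+(f'/f)\operatorname{Id}$ and subtracting the two Riccati equations produces
\[
\tilde R_T=R_T-\tfrac{f''}{f}\operatorname{Id}-2\tfrac{f'}{f}A.
\]
Pairing with $X$ taken $g_t$-orthonormal (so $|X|^2_{\tilde h}=f^2$) and dividing by $f^2$ gives the first formula. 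For the slice-tangent plane $\sigma_{X,Y}$ I would apply the Gauss equation inside $(N,\tilde h)$,
\[
\tilde R(X,Y,Y,X)=\tilde R^{\mathrm{slice}}(X,Y,Y,X)+\widetilde{\mathrm{II}}(X,Y)^2-\widetilde{\mathrm{II}}(X,X)\widetilde{\mathrm{II}}(Y,Y);
\]
for $X,Y$ $g_t$-orthonormal one has $|X\wedge Y|^2_{\tilde h}=f^4$ and $\tilde R^{\mathrm{slice}}(X,Y,Y,X)=f^2 K^{\mathrm{int}}(\sigma_{X,Y})$, so expanding the product $(f f'+f^2 g_t(AX,X))(f f'+f^2 g_t(AY,Y))$ and dividing by $f^4$ produces the second formula.

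For the oblique plane $\sigma_{X+aT,Y}$, I would use multilinearity of the Riemann tensor to write
\[
\tilde R(X+aT,Y,Y,X+aT)=\tilde R(X,Y,Y,X)+2a\tilde R(X,Y,Y,T)+a^2\tilde R(T,Y,Y,T);
\]
the pure terms reassemble as $f^4\tilde K(\sigma_{X,Y})+a^2 f^2\tilde K(\sigma_{T,Y})$ after accounting for the normalising denominators. The cross term I would handle via the Codazzi equation in $(N,\tilde h)$: because $f$ is constant along slices and $\nabla^S g_t=0$, slice-covariant differentiation annihilates the $f f'\,g_t$ summand of $\widetilde{\mathrm{II}}$, so
\[
\tilde R(X,Y,Y,T)=(\nabla^S_X\widetilde{\mathrm{II}})(Y,Y)-(\nabla^S_Y\widetilde{\mathrm{II}})(X,Y)=f^2\bigl[(\nabla^S_X\mathrm{II})(Y,Y)-(\nabla^S_Y\mathrm{II})(X,Y)\bigr]=f^2\,h(R(X,Y)Y,T).
\]
Dividing by $|(X+aT)\wedge Y|^2_{\tilde h}=(f^2+a^2)f^2$ and collecting terms gives the third formula.

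The computation is algebraic throughout; the only genuinely non-routine step is the Codazzi manipulation identifying $\tilde R(X,Y,Y,T)$ with $f^2 h(R(X,Y)Y,T)$, since one has to observe that the slice Levi-Civita connection is insensitive to the constant conformal factor and therefore kills the $f f'\,g_t$ piece of $\widetilde{\mathrm{II}}$. Everything else is careful bookkeeping between $g_t$-norms and $\tilde h$-norms and the consistent use of the Riccati and Gauss identities.
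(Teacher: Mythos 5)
Your approach is essentially the paper's: the radial Riccati equation for $\sigma_{X,T}$ (the paper substitutes $\tilde A = A + (f'/f)\mathrm{Id}$ into $K(\sigma_{X,T}) = -g_t((A'+A^2)X,X)/g_t(X,X)$, which is exactly your subtraction of the two Riccati equations), the Gauss equation for the tangential planes, and multilinearity plus the Codazzi identity $\tilde F = f^2 F$ for the oblique planes. The first and third derivations are correct and match the paper's.

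However, your second computation does not actually produce the stated second formula, and you should not assert that it does without writing out the expansion. With $X,Y$ $g_t$-orthonormal and $a = g_t(AX,X)$, $b = g_t(AY,Y)$, $c = g_t(AX,Y)$, one has
\[
\widetilde{\mathrm{II}}(X,X)\,\widetilde{\mathrm{II}}(Y,Y) = (ff' + f^2 a)(ff' + f^2 b) = f^2(f')^2 + f^3 f'(a+b) + f^4 ab,
\]
and dividing by $|X\wedge Y|^2_{\tilde h} = f^4$ gives $(f'/f)^2 + \tfrac{f'}{f}(a+b) + ab$: the cross term carries coefficient $1$, not $2$. So your (correct) method yields
\[
\tilde K(\sigma_{X,Y}) = \frac{1}{f^2}K^{int}(\sigma_{X,Y}) + \left[c^2 - ab\right] - \left(\frac{f'}{f}\right)^2 - \frac{f'}{f}\,(a+b),
\]
which differs from the statement (and from the paper's own proof, which carries the same factor $2$). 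A model computation confirms coefficient $1$: take $g_t = e^{2t}g_0$ with $g_0$ flat, so $A = \mathrm{Id}$ and $K^{int} = 0$, and $f = e^{st}$; then $\tilde h = dt^2 + e^{2(s+1)t}g_0$ has constant curvature $-(s+1)^2 = -1 - s^2 - 2s$, which is what coefficient $1$ gives, whereas coefficient $2$ would give $-1 - s^2 - 4s$. (The factor $2$ is genuinely correct in the first formula, where it arises from the cross term of $\tilde A^2$; only the tangential formula is affected.) This does not damage the later use of the proposition, since the term only enters through its sign under the hypotheses $A\geq 0$, $f'\geq 0$, but you should exhibit the expansion and flag the discrepancy rather than claim agreement with the stated formula.
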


\begin{proof}
This essentially follows from the Gauss and Codazzi equations (see \S 6.3 in \cite{DoCarmo}). Let us start with the sectional curvature of the plane $\sigma_{X,T}$ generated by $T$ and $X\neq 0$. It is given by
\[
K(\sigma_{X,T}) = - \frac{g_t((A'+A^2)X,X)}{g_t(X,X)},
\]
so that
\[
\tilde{K}(\sigma_{X,T}) = - \frac{f''}{f} + K(\sigma_{X,T}) - 2 \frac{f'}{f} \frac{g_t(AX,X)}{g_t(A,A)}. 
\]

Let us now turn to the curvature of the plane $\sigma_{X,Y}$ generated by $X$ and $Y$. The Gauss' equation gives
\[
K(\sigma_{X,Y}) = K^{int}(\sigma_{X,Y}) - \frac{ g_t(AX,X)g_t(AY,Y) - g_t(AX,Y)^2}{g_t(X,X)g_t(Y,Y) - g_t(X,Y)^2}, 
\]
where $K^{int}$ is the sectional curvature of $g_t$. Since $f$ is constant on each slice, we find directly
\[
\tilde{K}^{int}(\sigma_{X,Y}) = \frac{1}{f^2} K^{int}(\sigma_{X,Y}). 
\]
We deduce that taking $X$ and $Y$ orthonormal (for $g_t$) at the point of interest, we get
\[
\begin{split}
\tilde{K}(\sigma_{X,Y}) &= \frac{1}{f^2} K^{int}(\sigma_{X,Y}) - \left[ g_t(AX,X)g_t(AY,Y) - g_t(AX,Y)^2\right] \\
		&\quad - \left( \frac{f'}{f}\right)^2  - 2 \frac{f'}{f}( g_t(AX,X) + g_t(AY,Y) ). 
\end{split}
\]

Let us now turn to the curvature of the plane $\sigma_{T+aX,Y}$, generated by $X + aT$ and $Y$, for some $a\neq 0$. Without changing the plane (but changing $a$), we can assume that $X,Y$ are orthogonal (for $g_t$) at the point of interest. Then we have 
\[
K(\sigma_{X+aT, Y}) = \frac{ h( R(X+aT, Y)Y, X+aT )}{(a^2+g_t(X,X))g_t(Y,Y)}. 
\]
This gives (using the symmetries of the curvature tensor)
\[
\begin{split}
K(\sigma_{X+aT, Y}) =& \frac{1}{a^2+g_t(X,X)^2}\left(g_t(X,X)^2 K(\sigma_{X,Y}) + a^2 K(\sigma_{T,Y})\right) \\
& + \frac{2a}{(a^2+g_t(X,X)^2)g_t(Y,Y)}\langle R(X,Y)Y,T\rangle. 
\end{split}
\]
It is this last mixed term in the RHS that was estimated using coordinates and Christoffel coefficients in \cite{Chen-Erchenko-Gogolev}. We are seeking to compute
\[
F(X,Y):= \langle \nabla_X \nabla_Y Y - \nabla_Y \nabla_X Y - \nabla_{[X,Y]} Y, T\rangle. 
\]
According to the Codazzi equation, we have
\[
F(X,Y)=\nabla_X^{int} II(Y,Y) - \nabla_Y^{int} II(X,Y). 
\]
Now, we observe that
\[
\tilde{F}(X,Y)= f(t)^2 F(X,Y). 
\]
This $f(t)^2$ term will be compensated by $\tilde{h}(Y,Y)$, so we conclude that
\[
\tilde{K}(\sigma_{X+aT,Y}) = \frac{ f(t)^2 \tilde{K}(\sigma_{X,Y}) + a^2 \tilde{K}(\sigma_{T,Y}) + 2 a F(X,Y) }{f(t)^2 + a^2}.
\]
\end{proof}

\subsection{Extension of the metric}

We assume that we are given $(M,g)$ as in the introduction. Near the boundary, we can always add cylinders, to define
\[
N:= M \cup (\partial M)_x\times ]0, +\infty]_t, 
\]
which is a smooth manifold with boundary, diffeomorphic to $M$. Theorem \ref{thm:conformally-compact} follows from the following
\begin{lemma}\label{lemma:extension}
We can build a metric $\tilde{h}$ on the interior of $N$ that extends the metric of $M$, and so that this metric is conformal to a smooth metric on $N$. Additionally, there exists a constant $C_0>0$ so that for $\kappa>0$ large enough, we can ensure that
\begin{enumerate}
	\item In $\mathring{N}\setminus M$, the sets $\{t=t_0\}$ are equidistant sets from the boundary $\partial M$. They are uniformly strictly convex
	\item The sectional curvature of $\tilde{h}$ tends to $-\kappa^2$, with equality for $t\geq 1$ if $\partial M$ supports a constant curvature metric.
	\item The sectional curvature of $\tilde{h}$ is globally bounded above by $C_0$,
	\item For $t\geq 1/\sqrt{\kappa}$, the sectional curvature of $\tilde{h}$ is less than $-\kappa^2/2$. 
\end{enumerate}
\end{lemma}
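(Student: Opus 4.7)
The plan is to assemble $\tilde h$ on the cylinder side as a warped conformal product $dt^2 + f(t)^2 g_t$, using the three formulæ of the previous proposition as a direct template; both the slice family $(g_t)$ and the warp $f$ are designed by hand. In Fermi coordinates $(t,x)$ on a collar of $\partial M$ inside $M$, $g = dt^2 + g_t$ for $t \in (-\epsilon, 0]$, and strict convexity of $\partial M$ gives a uniform lower bound $A(0) \geq c_0 > 0$ on the shape operator $A(t) = \tfrac12 \partial_t g_t$. I extend $(g_t)$ smoothly to $t \geq 0$, agreeing with the original to infinite order at $t = 0$, with $A(t) \geq 0$ throughout, $g_t \equiv g_\infty$ for $t \geq 1$, and --- when $\partial M$ admits a constant sectional curvature metric --- with $g_\infty$ a rescaling of that model. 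Since the extension matches the original to infinite order, one arranges $A(t) \geq c_0/2$ on a fixed interval $[0, t_1]$ with $t_1 > 0$ independent of $\kappa$, deferring the transition of $A$ to zero into $[t_1, 1]$.

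The warp $f$ is taken $\equiv 1$ on $M$, and on the cylinder equal to $e^{\phi}$ with $\phi \equiv 0$ for $t \leq 0$ and $\phi'(t) = \kappa\, \chi(\sqrt\kappa\, t)$, where $\chi : \R_+ \to [0,1]$ is a fixed non-decreasing smooth cut-off, $\chi \equiv 0$ near $0$ and $\chi \equiv 1$ on $[1, \infty)$. Then $f'/f = \phi'$ is non-decreasing from $0$ at $t = 0$ to $\kappa$ for $t \geq 1/\sqrt\kappa$, and $\phi'' \geq 0$ yields $-f''/f = -\phi'' - (\phi')^2 \leq 0$. When $g_\infty$ has constant sectional curvature $c$, I further modify $f$ for $t \geq 1$ to an explicit $\sinh$, $\cosh$, or exponential profile in $\kappa t$ satisfying $f'' = \kappa^2 f$ and $(f')^2 - \kappa^2 f^2 = c$; this matches smoothly to the $e^\phi$ profile on $[0,1]$ because $f'/f \to \kappa$ to leading order at both endpoints.

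Substituting into the proposition, (1)--(4) follow term by term. For (2) on $t \geq 1$: under the constant-curvature assumption the explicit ODE for $f$ yields $\tilde K \equiv -\kappa^2$ exactly; otherwise $A \equiv 0$ and $K^{int}/f^2 \to 0$ give $\tilde K \to -\kappa^2$. For (3): the $\kappa$-dependent contributions ($-f''/f$, $-(f'/f)^2$, and the terms with $f'/f$ multiplied by the non-negative shape operator) are all $\leq 0$, while $K(\sigma_{X,T})$, $K^{int}/f^2 \leq K^{int}$, and $g_t(AX,Y)^2$ are uniformly bounded, so $\tilde K \leq C_0$. For (4) on $t \geq 1/\sqrt\kappa$: the dominant $-(f'/f)^2 \leq -\kappa^2$ beats the $O(1)$ contributions, giving $\tilde K \leq -\kappa^2/2$ once $\kappa$ is large. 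For (1), $\tilde A = A + f'/f$: on $[0, t_1]$ use $A \geq c_0/2$, and on $[t_1, \infty)$ note that for $\kappa$ large enough $1/\sqrt\kappa \leq t_1$, whence $f'/f = \kappa$, so $\tilde A \geq \kappa$. Finally, conformal compactness comes from the change of variable $r = e^{-\kappa t}$, which presents $\tilde h$ near the end as $r^{-2}\bigl(\kappa^{-2}\, dr^2 + (rf)^2 g_\infty\bigr)$, with the bracketed metric smooth up to $r = 0$.

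The main obstacle is the scheduling of the two transitions --- the decrease of $A$ from $A(0)$ to $0$ and the growth of $f'/f$ from $0$ to $\kappa$ --- so that $\tilde A$ stays uniformly positive throughout the cylinder. This works because the former happens on a $\kappa$-independent scale while the latter happens on scale $1/\sqrt\kappa$, so for $\kappa$ large the warp saturates well before $A$ starts decreasing, and the two transitions do not overlap destructively. A secondary subtlety is maintaining $A \geq 0$ along the $(g_t)$-path from $g_0$ to $g_\infty$, which is arranged by moving through suitable dilations and convex interpolations in the space of positive-definite symmetric $2$-tensors on $\partial M$.
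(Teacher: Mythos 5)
Your construction is essentially the paper's: extend $(g_t)$ with $A\geq 0$, constant for $t\geq 1$, keep a $\kappa$-independent lower bound on $A$ near $t=0$ to carry convexity until $f'/f$ takes over, make $f$ convex with $f\equiv 1$ on $M$ and $f'/f\sim\kappa$ past $t=1/\sqrt{\kappa}$, use the explicit $\cosh/\exp/\sinh$ profile to get constant curvature $-\kappa^2$, and compactify via $y=e^{-\kappa t}$. The term-by-term verification of (1)--(4) matches the paper's.

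The one weak point is the junction at $t=1$ between your $e^{\phi}$ profile and the exact hyperbolic profile: ``$f'/f\to\kappa$ to leading order at both endpoints'' is not a smooth matching. The constant in $f_{cc}=\tfrac{C}{\kappa}\cosh(\kappa\,\cdot)$ is pinned by the curvature of $g_\infty$, so you only have a time-translation parameter to play with, and you cannot generically match value and derivative at a single point; you would need a genuine interpolation region, during which you must re-verify $f''\geq 0$ and the $-\kappa^2/2$ bound, and which pushes the exact equality $\tilde K=-\kappa^2$ past $t=1$. The paper avoids this entirely by taking $f=f_{cc}$ on all of $[t_0,\infty)$ with $t_0=1/\sqrt{\kappa}$ and only interpolating convexly on $[0,t_0]$, which reduces to the explicit check $f_{cc}(t_0)-f_{cc}'(t_0)\,t_0<1$ (the point $(0,1)$ lies above the tangent line at $t_0$). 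Adopting that version removes the gap; otherwise your argument is sound.
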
 

\begin{proof}
We will work near a fixed boundary component $P\subset \partial M$. Near $P$, we can write the metric in the form
\[
h=dt^2 + g_t(x,dx), 
\]
where $g_t$ is a family of metrics on $P$, $-\epsilon< t\leq 0$, and $t$ is the geodesic distance to $P$. We can extend the family $g_t$ smoothly to $]-\epsilon, +\infty[$, and since $\partial_t g_t > 0$ near $ t = 0$, we can ensure that 
\begin{itemize}
	\item $\partial_t g_t > \tfrac{1}{2}\partial_t g_t|_{t=0}$ for $t\in ]-\epsilon, 1/2]$
	\item $\partial_t g_t \geq 0$ for all $t$'s
	\item $\partial_t g_t = 0$ for $t\geq 1$. 
\end{itemize} 
Additionally, if we are given a metric $g'$ on $P$, we can ensure that $g_t = C g'$ for $t\geq 1$ and some constant $C>0$. When $P$ supports a metric of constant sectional curvature, this is the choice that we make. 

Next, we want to look for $\tilde{h} = dt^2 +f(t)^2 g_t(x,dx)$ in the form given by the previous \S. We impose $f=1$ on $]-\epsilon, 0]$.

Let us analyze the conditions of the theorem and their consequence for $f$. First, to ensure uniform strict convexity, it suffices to assume that $f''\geq 0$ globally, and that $f'/f>C>0$ on $[1/2, +\infty[$. Next, $f''\geq 0$ also ensures\footnote{One needs that $f''\geq 0$, $f'\geq 0$ and $f\geq 1$. Since $f=1$ on $]-\epsilon, 0]$ and is smooth, it suffices to assume that $f''\geq 0$} that $\tilde{K}\leq K$, so that (3) is satisfied with $C_0 \geq \sup K$. 

Let us now consider property (2). We observe that since for $t\geq 1$, $\partial_t g_t = 0$, we get the formulæ ($t\geq 1$)
\begin{align*}
\tilde{K}(\sigma_{X,T}) &= -\frac{f''}{f} \\
\tilde{K}(\sigma_{X,Y}) &= \frac{1}{f^2}K^{int}(\sigma_{X,Y}) - \left(\frac{f'}{f}\right)^2\\
\tilde{K}(\sigma_{X+aT,Y})&= \frac{ f(t)^2 \tilde{K}(\sigma_{X,Y}) + a^2 \tilde{K}(\sigma_{Y,T})}{f(t)^2+ a^2}. 
\end{align*}
This suggests to take for $t\geq 1$
\begin{equation}\label{eq:f-away}
f(t)= f_{cc}(t):= \begin{cases} \frac{C}{\kappa} \cosh(\kappa t) & \text{if $\inf K^{int}_{t\geq 1} = - C^2<0$,}\\
e^{\kappa t} &\text{if $K^{int}_{t\geq1} =0$,}\\
\frac{C}{\kappa} \sinh(\kappa t) & \text{if $K^{int}_{t\geq 1}\geq 0$ and $\sup K^{int}_{t\geq 1} = C^2 >0$.}
\end{cases}
\end{equation}
This ensures that the curvature tends to $-\kappa^2$ as $t\to+\infty$ (exponentially fast in $t$). When $g_{t\geq 1}$ has constant curvature, we have $\tilde{K}=-\kappa^2$ for $t\geq 1$. 

Now, for item (4), we observe that for $C_0$ large enough,
\[
\tilde{K}(\sigma_{X,T}) \leq - \frac{f''}{f} + C_0,\quad \tilde{K}(\sigma_{X,Y}) \leq \frac{C_0}{f^2} - (f'/f)^2,
\]
\[
\tilde{K}(\sigma_{X+aT,Y}) \leq \max(\tilde{K}(\sigma_{X,T}),\tilde{K}(\sigma_{X,Y}) ) + \frac{C_0}{f}. 
\]
Let $t_0\in(0,1)$ be such that the function $f_{cc}$ defined in \eqref{eq:f-away} satisfies $f_{cc}\geq 1$ for $t\geq t_0$. Then taking this suggestion for $f$, not only for $t\geq 1$ but $t\geq t_0$, we get
\[
\tilde{K} \leq 2C_0 - \kappa^2.
\]
We observe that (for example), $t_0 = 1/\sqrt{\kappa}$ satisfies the condition for $\kappa$ large. We also observe that $f_{cc}''\geq 0$ for $t\geq 0$, and $f_{cc}'/f_{cc}> C>0$ for $t>1/2$, uniformly as $\kappa$ becomes large. It remains thus to define $f$ on the interval $[0,1/\sqrt{\kappa}]$ so that $f''\geq 0$, to obtain a smooth function. The only condition for this is that $(0,f(0))=(0,1)$ is strictly above the tangent to the graph of $f_{cc}$ at $t=t_0 =1/\sqrt{\kappa}$. Accordingly, we require that
\[
f_{cc}(t_0) - f_{cc}'(t_0) t_0 < 1. 
\]
We can then check case by case that this holds for $t_0 = 1/\sqrt{\kappa}$ and $\kappa$ large enough. 

Let us finally check the smooth conformal compactness. For this, it suffices to set $y = e^{-\kappa t}$, to write the metric in the cylinder in the form
\[
\tilde{h} = \frac{1}{\kappa^2 y^2}\left( dy^2 + m(y)g_{t\geq 1}(x,dx)\right). 
\]
Here, the function $m$ is a smooth function of $y^2$, and $y$ is a boundary defining function. 
\end{proof}

\section{General observations about the topology of the problem}

Let us collect some general informations about universal covers of manifolds with hyperbolic geodesic flow. This is classical material. 

We will denote by $\pi:\widetilde{M}\to M$ the universal cover of $M$, and by $\widetilde{N}$ that of $N$. Since $M$ is geodesically convex, and does not have conjugate points, we can pick $x_0\in M$ and identify $\widetilde{M}$ with the set of $u\in T_x M$ such that $\exp_x(u)\in M\simeq \R^n$. By this construction, we identify $\widetilde{M}$ as a geodesically convex subset of $\widetilde{N}\simeq \R^n$. The fundamental group $\pi_1(M)$ is realized by isometries of $\widetilde{N}$, which preserve $\partial \widetilde{M}$. 

In particular, $\partial\widetilde{M}$ is a Galois cover of $\partial M$. The embedding $\imath : \partial M \hookrightarrow M$ induces a map $\imath_\ast : \pi_1(\partial M)\to \pi_1(M)$. If $P$ is a connected component of $\partial M$, and $\widetilde{P}$ a connected component of $\pi^{-1}(P)$, then we have $\pi_1(\widetilde{P}) = \{ \gamma \in \pi_1(P)\ |\ \imath_\ast(\gamma) = 0\}$, and $P = \widetilde{P}/\imath_\ast(\pi_1(P))$. 

The elements of $\pi_1(M)$ (except $1$) are represented by a special kind of isometries, called \emph{loxodromic}. They have a continuous extension to (H\"older) homeomorphisms of $B^n = \widetilde{N}\cup\Ss^{n-1}$ (the \emph{visual compactification} of $\widetilde{M}$). They have exactly two fixed points, which lie in $\Ss^{n-1}$. They preserve the corresponding geodesic, along which they are a translation. In particular, if two isometries $\gamma$, $\mu$ commute, there must exist another $\eta$, and $k,\ell\in\N$ so that $\gamma = \eta^k$, $\mu = \eta^\ell$. This implies that there is no copy of $\Z^2$ inside $\pi_1(M)$. 

Projecting $\partial \widetilde{M}$ along rays from $x_0$ on $\Ss^{n-1}$, we find that $\partial\widetilde{M}$ is (H\"older) homeomorphic to an open set of $\Ss^{n-1}$. Its complement $\Lambda$ is called the \emph{limit set}. It is the set of endpoints of geodesics that remain inside in $M$ for all times.

From these general facts, we deduce our Theorem \ref{thm:special-cases}. 
\begin{proof}
Let us start by observing that the following are equivalent
\begin{enumerate}
	\item At least one connected component of $\partial \widetilde{M}$ is a sphere
	\item At least one connected component of $\partial \widetilde{M}$ is compact
	\item $\partial\widetilde{M}\simeq \Ss^{n-1}$
	\item $\widetilde{M}\simeq B^n$
	\item $\pi_1(M) =\{1\}$
	\item $M$ is diffeomorphic to a closed ball. 
\end{enumerate}

Certainly, (1) implies (2), and since $\partial \widetilde{M}$ is an open set of $\Ss^{n-1}$, if one of its connected component is compact, it must the whole sphere, so (2) implies (3). Using rays starting from $x_0$, we can then build a diffeomorphism between $\widetilde{M}$ and $B^n$, so that (3) implies (4). In that case, every element of $\pi_1(M)$ preserves a compact set of $\R^n$, so must be trivial, and (4) implies (5). If (5) holds, then $M= \widetilde{M}$ must be compact, and using again rays from $x_0$, we find that $M$ is a closed ball. Finally, if $M$ is a closed ball, its boundary is a sphere. This takes care of item (1) of Theorem \ref{thm:special-cases}. \\

Now, we also have equivalence between
\begin{enumerate}[(a)]
	\item $\Lambda$ has exactly two points
	\item The boundary of $M$ is diffeomorphic to a bundle: $\Ss^{n-2} \to \partial M \to \Ss^{1}$. 
	\item One connected component of $\partial M$ is diffeomorphic to a bundle $\Ss^{n-2} \to P \to \Ss^{1}$. 
	\item One connected component $P$ of $\partial M$ satisfies $\imath_\ast(\pi_1(P))\simeq \Z$. 
\end{enumerate}

Certainly, (a) implies that $\pi_1(M)$ is generated by a non trivial loxodromic isometry $\gamma$. Taking Fermi-coordinates along the geodesic it preserves, this gives a decomposition of $\tilde{N}$ in the form $\R\times \R^{n-1}$, where $\gamma$ acts as follows:
\[
\gamma(t,x) = (t+\ell_\gamma, A_\gamma x).
\]
Here, $\ell_\gamma$ is the translation length of $\gamma$ and $A_\gamma\in O(n-1)$. The preserved geodesic is given by $\R\times\{0\}$. Since $\gamma$ must be represented in $M$ by a closed geodesic, this implies that $\R \times \{0\} \subset \widetilde{M}$. The convexity of $M$ implies that the intersection $\widetilde{M}\cap \{t\} \times \R^{n-1}$ is star-shaped for every $t\in\R$. Additionally, since $\pi_1(M) = <\gamma>$, this intersection must be compact, with smooth boundary, so that it is diffeomorphic to a ball. We deduce that the boundary of $M$ is diffeomorphic to the sphere bundle
\[
\R \times \Ss^{n-1}/{(t,x)\sim (t+\ell_\gamma, A x)}. 
\]
Topologically, there are exactly two such bundles\footnote{Because $SO(n-1)$ is connected.}: either $A\in SO(n-1)$, and this is the trivial bundle, or $\det A = -1$, and $M$ is not oriented, with a M\"obius-band like structure. This implies (b) (which implies (c)). 

Now assume (c) and let $P$ be the corresponding connected component, and $\widetilde{P}$ a connected component of $\pi^{-1}(P)$. From the arguments in the ball case above, we know that $\widetilde{P}$ cannot be compact. If $n>3$, this implies $\widetilde{P} = \R \times \Ss^{n-2}$, and $\imath_\ast(\pi_1(P)) \simeq \Z$ is generated by one non trivial isometry. In the case $n=3$, we have to consider that case $\widetilde{P}\simeq \R^2$. However, since $\pi_1(M)$ cannot contain $\Z^2$, this case is ruled out, and so it is the same as $n>3$, and we have (d). 

Now, assuming (d), let $\imath_\ast(\pi_1(P))=<\gamma>$. Then let $c(t)$ be the geodesic preserved by $\gamma$. We find that $\widetilde{P}$ is at bounded distance from $\{c(t) | {t\in\R}\}$. This implies that $\partial \widetilde{P}$ (seen as a subset of $\Ss^{n-1}$) can only contain the endpoints of $c(t)$, and so this implies (a).  
\end{proof}

We have identified two type of particularly simple boundaries. Let us discuss now some more partial results in this direction. 
\begin{lemma}
Let us assume that a boundary component $P$ satisfies $\Gamma:=\imath(\pi_1(P))= \pi_1(\Sigma)$, where $\Sigma$ is a compact Anosov manifold of dimension $p\leq n-2$. Then $P$ is the only boundary component. 
\end{lemma}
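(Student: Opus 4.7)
The plan is to work with the intermediate cover $M':=\widetilde{M}/\Gamma\to M$. Since $\widetilde{M}$ is a geodesically convex open subset of $\R^n$ it is contractible, and $\Gamma$ acts freely and properly discontinuously by deck transformations, so $M'$ is a smooth $n$-manifold-with-boundary that is a $K(\Gamma,1)$. Because $\Gamma\cong\pi_1(\Sigma)$ and a closed Anosov manifold is aspherical, one obtains a homotopy equivalence $M'\simeq\Sigma$; together with the hypothesis $p\leq n-2$ this yields
\[
H_{n-1}(M';\Z/2)=H_n(M';\Z/2)=0.
\]

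Next, I single out a compact component of $\partial M'$. The lift $\widetilde{P}\subset\partial\widetilde{M}$ whose stabiliser in $\pi_1(M)$ equals $\Gamma$ descends in $M'$ to the closed $(n-1)$-manifold $\widetilde{P}/\Gamma=P$, which therefore appears as a compact connected component of $\partial M'$ and carries a non-zero fundamental class $[P]\in H_{n-1}(\partial M';\Z/2)$.

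The crux is then the long exact sequence of the pair $(M',\partial M')$ with $\Z/2$-coefficients, combined with the Poincar\'e--Lefschetz identification $H_n(M',\partial M';\Z/2)\cong H^0_c(M';\Z/2)$ (working mod $2$ sidesteps any orientability question). Suppose first that $M'$ is non-compact; then $H^0_c(M';\Z/2)=0$, and the segment
\[
0=H_n(M',\partial M';\Z/2)\longrightarrow H_{n-1}(\partial M';\Z/2)\longrightarrow H_{n-1}(M';\Z/2)=0
\]
forces $H_{n-1}(\partial M';\Z/2)=0$, contradicting $[P]\neq 0$. Hence $M'$ must be compact, i.e.\ $[\pi_1(M):\Gamma]<\infty$. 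In that case $H_n(M',\partial M';\Z/2)=\Z/2$, and the same piece of the exact sequence now yields $H_{n-1}(\partial M';\Z/2)=\Z/2$, so the closed $(n-1)$-manifold $\partial M'$ has exactly one connected component, which must be $P$.

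To conclude, the restriction $\partial M'\to\partial M$ of the covering $M'\to M$ is a surjective covering map with connected total space, so $\partial M$ is itself connected and equal to $P$. The delicate point is the mod-$2$ Lefschetz computation in the non-compact case---specifically the vanishing $H^0_c(M';\Z/2)=0$---together with the assertion that $\Sigma$ is aspherical; both are standard, and everything else follows formally from the group-theoretic setup.
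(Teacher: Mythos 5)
Your argument is correct, and it is genuinely different from the one in the paper. The paper works entirely at infinity: it identifies the frontier of $\widetilde{P}$ in the visual sphere $\Ss^{n-1}$ with the limit set of $\Gamma$, uses that $\Gamma=\pi_1(\Sigma)$ is a hyperbolic group whose Gromov boundary is $\Ss^{p-1}$ (so the limit set is a $(p-1)$-sphere), and then invokes connectivity of $\Ss^{n-1}\setminus\Ss^{p-1}$ for $p\leq n-2$ to conclude that the clopen set $\widetilde{P}$ is the whole complement, i.e.\ that $\partial\widetilde{M}$ is connected. You instead pass to the intermediate cover $M'=\widetilde{M}/\Gamma$, which is a $K(\Gamma,1)$ and hence homotopy equivalent to the $p$-dimensional $\Sigma$, and run the pair sequence of $(M',\partial M')$ against Poincar\'e--Lefschetz duality mod $2$: the vanishing $H_{n-1}(M')=H_n(M')=0$ (this is where $p\leq n-2$ enters) forces $M'$ to be compact with connected boundary, and surjectivity of $\partial M'\to\partial M$ finishes. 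All the individual steps check out: the stabiliser of $\widetilde{P}$ is indeed $\Gamma$ so that $\widetilde{P}/\Gamma=P$ is a compact component of $\partial M'$; Lefschetz duality in the form $H_n(M',\partial M';\Z/2)\cong H^0_c(M';\Z/2)$ is valid for the second-countable manifold with boundary $M'$; and asphericity of $\Sigma$ follows from absence of conjugate points. What your route buys is that it bypasses the geometric-group-theoretic input the paper relies on --- namely that $\partial\Gamma\simeq\Ss^{p-1}$ and, more delicately, that the limit set of $\Gamma$ in $\Ss^{n-1}$ is homeomorphic to $\partial\Gamma$ (a quasi-convexity statement the paper does not justify in detail); the only property of $\Sigma$ you use is that it is a closed aspherical manifold of dimension $\leq n-2$. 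What the paper's route buys in exchange is the finer geometric conclusion that $\widetilde{P}=\Ss^{n-1}\setminus\Ss^{p-1}$, which is what feeds the subsequent discussion of when $M$ is a ball bundle over $\Sigma$; your homological argument identifies the number of boundary components but not the topology of $\widetilde{P}$.
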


\begin{proof} In that case, the visual boundary $\partial \Gamma$ is homeomorphic to a sphere $\Ss^{p-1}$, and since $\Gamma$ is a hyperbolic subgroup of $\pi_1(M)$, the limit set of $\Gamma$, which must be also $\partial \tilde{P}$, is also homeomorphic to $\Ss^{p-1}$. Now, since $\Ss^{n-1}\setminus\Ss^{p-1}$ must be connected, we deduce that $\Ss^{n-1}\setminus \Ss^{p-1} = \tilde{P}$. This means that $P$ is the whole boundary of $M$. 
\end{proof}

If the embedding of $\Ss^{p-1}$ into $\Ss^{n-1}$ is the standard one, we get that $\tilde{P}\simeq \Ss^{n-p-1}\times\R^{p}$, and $M$ turns out to be tubular neighbourhood of $\Sigma$. However, there are more than one way to embed a sphere in another, except in the case that $n> 2p$. 
\begin{cor}
If $\Sigma$ is a surface, and $M$ has dimension at least $5$, then $M$ is diffeomorphic to a ball bundle over $\Sigma$.
\end{cor}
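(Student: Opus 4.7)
My plan is to upgrade the conclusions of the preceding lemma, valid when $p\leq n-2$, to a full diffeomorphism classification of $M$, combining codimension-$\geq 3$ unknotting with the $s$-cobordism theorem.

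Applying the previous lemma with $p=2$, we already know that $P$ is the unique boundary component of $M$, the limit set $\Lambda=\partial\tilde{P}\subset\Ss^{n-1}$ is a topological circle, and $\tilde P=\Ss^{n-1}\setminus\Lambda$. Since $n\geq 5$, the codimension of $\Lambda$ in $\Ss^{n-1}$ is $n-2\geq 3$, so by the classical unknotting theorem for $\Ss^1$ in codimension at least $3$ (Zeeman/Haefliger in the smooth category, or the topological locally flat version), the embedding $\Lambda\hookrightarrow\Ss^{n-1}$ is equivalent to the standard round one. In particular, $\tilde P\simeq\Ss^{n-3}\times\R^2$. Because $n-3\geq 2$, this $\tilde P$ is simply connected, hence is actually the universal cover of $P$, so $\pi_1(P)=\Gamma\simeq\pi_1(\Sigma)$. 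Moreover, since $\pi_1(M)$ preserves $\tilde{P}$ (the only boundary component) and acts freely with quotient $P$, a covering argument forces $\pi_1(M)=\Gamma$; the quotient $P=\tilde{P}/\Gamma$ is then a sphere bundle $\Ss^{n-3}\to P\to\Sigma$.

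Next I would produce a smooth copy of $\Sigma$ sitting inside $M$ as the convex core of $\Gamma$. The set $C\subset\tilde M$ of points lying on a geodesic with both endpoints in $\Lambda$ is $\Gamma$-invariant, and, thanks to strict convexity of $\partial M$ together with the Anosov property, is a smoothly embedded $2$-dimensional submanifold of $\tilde M$ diffeomorphic to $\R^2$. Its quotient $\Sigma_0:=C/\Gamma\subset\mathring M$ is an embedded compact surface diffeomorphic to $\Sigma$, and the inclusion $\Sigma_0\hookrightarrow M$ is a homotopy equivalence because both universal covers are contractible and $\pi_1(M)=\pi_1(\Sigma_0)=\Gamma$.

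Finally, I would argue that $M$ is diffeomorphic to the closed normal disk bundle $D(\nu)\to\Sigma_0$. Let $U$ be an open tubular neighbourhood of $\Sigma_0$, so $U\simeq D(\nu)$. The complement $W:=M\setminus U$ is a compact smooth cobordism between $\partial U$ (the sphere bundle of $\nu$) and $\partial M=P$; both are $\Ss^{n-3}$-bundles over $\Sigma$, and the inclusions $\partial U\hookrightarrow W$, $P\hookrightarrow W$ are homotopy equivalences. With $n\geq 5$ and $\pi_1(W)\simeq\pi_1(\Sigma)$ a surface group, for which the Whitehead group vanishes, the $s$-cobordism theorem yields $W\simeq P\times[0,1]$, whence $M\simeq D(\nu)$. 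The main obstacle is the $s$-cobordism step: verifying that the sphere bundle structure on $P$ coincides with the normal sphere bundle of $\Sigma_0$, so that the trivialization is fibre-preserving. One should also take care in the first step because $\Lambda$ is only Hölder; this can likely be sidestepped either by smoothing $\Lambda$ via a small isotopy before applying unknotting, or by producing the product structure $\tilde P\simeq\Ss^{n-3}\times\R^2$ directly from the $\Gamma$-equivariant normal bundle of $C$ in $\tilde M$, bypassing the asymptotic picture altogether.
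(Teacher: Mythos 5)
Your overall route is the one the paper gestures at: in the range $n>2p=4$ the circle $\Lambda$ unknots in $\Ss^{n-1}$, so $\tilde P\simeq\Ss^{n-3}\times\R^2$ and $M$ is a tubular neighbourhood of a core surface; your $s$-cobordism step is a legitimate way of making ``tubular neighbourhood'' precise, and your worry about $\Lambda$ being merely a H\"older (hence not obviously locally flat) circle is well placed --- the paper is silent on it, and your suggestion to work with the normal data of an embedded core rather than with the asymptotic picture is probably the cleaner fix.

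The one step that genuinely fails as written is your construction of $\Sigma_0$. The set $C$ of points lying on geodesics with both endpoints in $\Lambda$ is in general \emph{not} a $2$-dimensional submanifold: already for a quasi-Fuchsian, non-Fuchsian group acting on $\Hh^3$ the limit set is a non-round Jordan curve and the union of the geodesics joining its points has nonempty interior, and thickening such an example shows that neither strict convexity of $\partial M$ nor hyperbolicity of the trapped set forces $C$ to be a surface. Fortunately you do not need $C$: since $\pi_1(M)=\Gamma=\pi_1(\Sigma)$ and $M$ is aspherical, there is a homotopy equivalence $\Sigma\to \mathring M$, and because $2\dim\Sigma<n$ (this is exactly where $n\geq 5$ enters a second time) Whitney's theorem lets you homotope it to a smooth embedding $\Sigma_0\hookrightarrow\mathring M$. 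With that replacement your tubular-neighbourhood-plus-$s$-cobordism scheme (using $\mathrm{Wh}(\pi_1(\Sigma))=0$) is the right way to finish; the remaining work, which you correctly identify, is to check that both boundary inclusions into $W=M\setminus U$ are homotopy equivalences and that the normal sphere bundle of $\Sigma_0$ matches the bundle structure on $P$, and it is there --- not on the H\"older circle $\Lambda$ --- that a genuine unknotting theorem (Haefliger, valid since $2n>3(\dim\Sigma+1)$ when $n\geq5$) should be invoked.
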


In 4 dimensions, understanding exactly which $B_2$ bundles over a compact surface can be endowed with a complete convex hyperbolic structure is not completely solved. This question was investigated by several authors --- see \cite{gromov}, \cite{convex-plumbing}.

Let us close this section with a modicum of information regarding the general case for the possible shape of the boundary. We specialize to the case of $M$ having 4 dimensions, to be able to use the geometrization theorem. For this, we will rely on several results from the theory of the topology of 3 manifolds. For example, the reader can consult \cite{Freitas}, in particular its \S 3. Since it is not our main focus, we will be very cursory.
\begin{lemma}
Let $M$ be an Anosov $4$-manifold with boundary, with some boundary component $P\neq \emptyset$. Assume that $P\subset M$ is $\pi_1$-injective. Then either $P=\Ss^3$ or it decomposes as a connected sum
\[
A_1 \# ... \# A_p \# B_1 \# \dots \# B_\ell, 
\]
where each $A_j$ is a copy of $\Ss^2\times \Ss^1$, and each $B_j$ is a compact hyperbolic 3 manifold. 
\end{lemma}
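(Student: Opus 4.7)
The plan is to combine the group-theoretic restrictions on $\pi_1(M)$ established in the previous section with Perelman's geometrization theorem. From \S 2, $\pi_1(M)$ is torsion-free (every non-trivial element being a loxodromic isometry of $\widetilde N$, hence of infinite order) and contains no copy of $\Z^2$ (two commuting loxodromics share an axis and so are powers of a common element). Since $P$ is $\pi_1$-injective, $\pi_1(P)$ inherits both properties, and the same then holds for any free factor of $\pi_1(P)$.

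I would start by invoking the Kneser--Milnor prime decomposition: either $P=\Ss^3$, or $P = P_1\#\cdots\#P_k$ with $\pi_1(P)=\pi_1(P_1)\ast\cdots\ast\pi_1(P_k)$. The Sphere Theorem identifies each prime factor as either $\Ss^2\times\Ss^1$ (which is already on the target list) or irreducible. Since each $\pi_1(P_i)$ sits inside $\pi_1(M)$ and therefore remains torsion-free and $\Z^2$-free, the irreducible factors can be analyzed one at a time.

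The heart of the argument is to show that each irreducible $P_i$ is hyperbolic. If $\pi_1(P_i)$ is trivial then $P_i = \Ss^3$ by Perelman and is absorbed into the identity of the connected sum. If $\pi_1(P_i)$ is finite and non-trivial then $P_i$ is a spherical space form by the elliptization theorem, whose fundamental group carries torsion, contradicting $\pi_1$-injectivity. If $\pi_1(P_i)$ is infinite and $P_i$ contains an incompressible torus, then $\Z^2\simeq\pi_1(T^2)\hookrightarrow\pi_1(P_i)$, contradicting the $\Z^2$-free property. Otherwise $P_i$ is atoroidal with infinite fundamental group, and geometrization forces it to be either Seifert fibered or hyperbolic. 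To exclude the Seifert case I would use the short exact sequence $1\to\Z\to\pi_1(P_i)\to\pi_1^{\mathrm{orb}}(B)\to 1$ in which the kernel $\Z$, generated by the regular fiber, is normal and central up to a subgroup of index at most $2$; since $P_i$ is not spherical, the base orbifold $B$ has $\chi\leq 0$ and $\pi_1^{\mathrm{orb}}(B)$ contains an element of infinite order, so lifting this element and pairing it with (a power of) the fiber produces a $\Z^2$ subgroup and yields the final contradiction.

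The main obstacle is formulating the Seifert-fibered exclusion cleanly---the precise statement being that every closed Seifert fibered 3-manifold with infinite $\pi_1$ contains $\Z^2$---but this is a well-known consequence of the classification of Seifert fibered 3-manifolds, and I would simply cite Scott's survey on the geometries of 3-manifolds. All other ingredients (Kneser--Milnor, the Sphere Theorem, elliptization, geometrization, hyperbolization of atoroidal Haken pieces) are standard and layer straightforwardly on top of the restrictions on $\pi_1(M)$ already obtained.
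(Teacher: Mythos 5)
Your proposal is correct and follows essentially the same route as the paper: prime (Kneser--Milnor) decomposition, then geometrization, then exclusion of all non-hyperbolic, non-$\Ss^2\times\Ss^1$ pieces using the torsion-freeness and absence of $\Z^2$ in $\pi_1(M)$ together with $\pi_1$-injectivity of $P$. The only cosmetic difference is that you phrase the final case analysis as ``atoroidal $\Rightarrow$ Seifert fibered or hyperbolic'' and kill the Seifert case via the central fiber, whereas the paper enumerates the eight Thurston geometries and rules out Euclidean/Nil/Sol and $\Hh^2\times\R$/$\widetilde{SL(2,\R)}$ by the same $\Z^2$ and normal-cyclic-subgroup arguments.
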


\begin{proof}
According to the prime decomposition theorem, we can decompose
\[
P = P_1 \# \dots \# P_m,
\]
where no $P_j$ can be decomposed as a non-trivial connected sum, and (unless $P= \Ss^3$) each $P_j$ is either $\Ss^1\times\Ss^2$ or is irreducible (i.e every embedding of $\Ss^2$ bounds a ball). According to Van-Kampen's theorem, we have that
\[
\pi_1(P)= \pi_1(P_1)\ast \dots \ast \pi_1(P_m).
\]

Now, according to the geometrization theorem, we can decompose each irreducible $P_j$ as
\[
P_j = \cup Q_{j,\ell}, 
\]
where each $Q_{j,\ell}$ is a manifold whose boundary components are torii, and whose interior admits a geometric structure, in the list of Thurston's 8 geometries. Also, each torus boundary is $\pi_1$ embedded. 

Now, since we assumed that $P$ is $\pi_1$-injective, and $\pi_1(M)$ cannot contain a $\Z^2$ subgroup, we deduce that there cannot exist any incompressible torus in the boundary, and, in particular, the decomposition of the $P_j$ must be trivial: each of them supports a Thurston geometry. 

Now, among the Thurston geometries, we observe that for either a compact Euclidean, Nil or Sol manifold, the fundamental group must be a semi-direct product $\Z^2 \times \Z$, which contains a $\Z^2$. 

In the $\Hh^2\times\R$ or $\widetilde{SL(2,\R)}$ case, the fundamental group must contain a cyclic normal subgroup, i.e a $\Z$ center. Since we cannot have torsion, it must also contain a $\Z^2$, and this is also ruled out. 

Next, we observe that in the spherical case, elements of the $\pi_1$ must have finite order, which is not possible because there are no elliptic elements in $\pi_1(M)$. In particular, if $P_j$ has spherical geometry, $P_j = \Ss^3$.

We deduce that each $P_j$ has geometry either $\Ss^2\times\R$, or $\Hh^3$, or is a sphere.
\end{proof}

\section{The case of 3-manifolds}

In this section, let us concentrate on the case that $M$ is 3 dimensional and oriented. Then the boundary components are compact oriented surfaces, so either a sphere, a torus or surfaces of genus $g>1$. As we have seen, if $M$ is neither a solid torus nor a ball, all the components must be of the latter variety. The authors of \cite{Chen-Erchenko-Gogolev} have already commented on how to embed the torus or the ball, so we will concentrate on the remaining case. As noted in \S\ref{sec:metric-extension} we can assume that the boundary components have constant curvature. 
\begin{theorem}\label{thm:compact-embedding-geometry}
Let $M$ be a 3-manifold whose boundary is strictly convex, with curvature $-\kappa^2$ constant near the boundary. Assume further that all connected components of the boundary are hyperbolic surfaces of genus $g>1$. Provided the hyperbolic metric is well chosen on the boundary, $M$ can be embedded into a compact manifold $N'$ without boundary, such that the curvature is $-\kappa^2$ in $N'\setminus M$. 
\end{theorem}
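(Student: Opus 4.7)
The plan is to cap each boundary component of $M$ by a compact hyperbolic $3$-piece and to match the gluing with the extension of Lemma \ref{lemma:extension}. More precisely, for each connected component $P_i$ of $\partial M$ (a closed orientable surface of genus $g_i>1$) I want to produce a compact orientable hyperbolic $3$-manifold $V_i$ of curvature $-\kappa^2$ with a single strictly concave boundary component, and to glue it to an appropriate truncation of the extension of $M$ near $P_i$.

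The decisive topological input, which I expect to be the only real difficulty and which is the content of Lecuire's argument, is the existence, for every integer $g>1$, of a compact orientable hyperbolic $3$-manifold $W_g$ of curvature $-\kappa^2$ with a single connected totally geodesic boundary of genus $g$. One natural route is to exhibit, inside some closed hyperbolic $3$-manifold, a separating embedded orientable totally geodesic surface of genus $g$ and cut along it: closed hyperbolic $3$-manifolds with embedded totally geodesic surfaces are produced by arithmetic constructions, and finite covering tricks should adjust the genus, enforce the separating property, and enlarge the width of the tubular neighborhood of the resulting boundary as needed.

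Granting this, let $\Sigma_i$ denote the totally geodesic boundary of $W_{g_i}$ with induced hyperbolic metric $h_i$. In Fermi coordinates on a tube $[0,r_{\max})\times\Sigma_i$ the metric has the standard form $ds^2+\cosh^2(\kappa s)\,h_i$, so removing the open collar $\{s<r\}$ (for some $0<r<r_{\max}$) leaves a compact hyperbolic $3$-manifold $V_i$ with a single boundary component at $\{s=r\}$, concave from inside, of induced metric $\cosh^2(\kappa r)\,h_i$ and principal curvatures $\kappa\tanh(\kappa r)$. On the $M$ side, I would apply Lemma \ref{lemma:extension} choosing on each $P_i$ the target metric $g'=h_i$ (via any diffeomorphism $P_i\simeq\Sigma_i$); in the constant-curvature region of the extension the metric reads $dt^2+\cosh^2(\kappa t)\,h_i$, identifying it with a piece of the Fuchsian hyperbolic $3$-manifold carried by $\Sigma_i$.

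After arranging the parameters so that the truncation level $t=r$ lies in the constant-curvature region of the extension and satisfies $r<r_{\max}$ (a matter of adjusting the constants in Lemma \ref{lemma:extension}, or of enlarging $r_{\max}$ by passing to a finite cover of $W_{g_i}$), I would glue the slice $\{t=r\}$ to the boundary $\{s=r\}$ of $V_i$ by the tautological isometry. Both sides near the interface are then pieces of the same hyperbolic model $ds^2+\cosh^2(\kappa s)\,h_i$, so the resulting metric is $C^\infty$ and has constant curvature $-\kappa^2$ on $N'\setminus M$. Performing this gluing at each boundary component of $M$ produces the desired closed $3$-manifold $N'$; the only substantive step is the topological existence of $W_g$, everything else being a direct matching of Fermi coordinates.
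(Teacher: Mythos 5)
Your gluing scheme is the same as the paper's: cap each boundary component with a compact hyperbolic $3$-manifold having totally geodesic boundary of the right genus (this existence statement is Fujii's theorem, quoted as Lemma~\ref{lemma:Fujii}, not Lecuire's contribution), choose the hyperbolic metric on $\partial M$ to be the one realized by that boundary, truncate a Fermi collar, and match it isometrically with the constant-curvature tail of the extension from Lemma~\ref{lemma:extension}. All of that is correct and is exactly how the paper proceeds.

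The genuine gap is the step you dismiss as ``a matter of adjusting the constants \dots or of enlarging $r_{\max}$ by passing to a finite cover of $W_{g_i}$.'' The dynamical arguments force $\kappa$ to be arbitrarily large, and the interface must sit at Fermi depth comparable to $\kappa$ inside the capping piece; hence one needs the embedded product collar of the totally geodesic boundary to have width growing like $\kappa$. That width is governed by half the length of the shortest boundary-orthogonal geodesic of $W_g$, and there is no reason an arbitrary finite cover increases it: a cover can leave short orthogeodesics (and the boundary itself) entirely unchanged, or break the boundary into several components. Producing covers that do unwrap every short orthogeodesic while keeping a boundary component isometric to $\Sigma$ is precisely the content of Lemma~\ref{lemma:good-N_g}, and its proof is not a routine covering trick: one doubles $N_g$ along $\Sigma$, invokes Agol's theorem that fundamental groups of compact hyperbolic $3$-manifolds are LERF \cite{Agol2012} to find a finite-index subgroup containing $\pi_1(\Sigma)$ but excluding the class of the doubled shortest orthogeodesic, and then runs a finite descent gluing copies of $N_g$ back onto the spare boundary components, eliminating the short orthogeodesics one length at a time. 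This LERF-based induction is the heart of the theorem (and the reason the statement fails to generalize to dimension $4$, where LERF can fail); your proposal leaves it unproved.
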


\begin{proof}
We start by recalling this result (see Theorem 3.3 in \cite{Fujii}).
\begin{lemma}\label{lemma:Fujii}
For $g>1$, there exists $N_g$ a compact hyperbolic 3-manifold whose boundary is a totally geodesic surface $S_g$ of genus $g$. 
\end{lemma}

(As explained in \cite{Fujii}, not all hyperbolic structures on surfaces can be realized as totally geodesic boundaries, essentially because of Mostow rigidity). 

Let us come back to our problem. We will from now on assume that the boundary of $M$ is connected. If there are several components, one can work componentwise. Let us hence denote $\Sigma=\partial M$, and endow $\Sigma$ with a hyperbolic metric $g_{\partial N_g}$ so that Lemma \ref{lemma:Fujii} applies, and we also have $\Sigma = \partial N_g$ as a totally geodesic boundary. Near the boundary of $(N_g, h_0)$, we have
\begin{equation}\label{eq:near-boundary-N_g}
N_g \simeq \Sigma_x \times [0, \delta[_\tau,\quad h_0 = d\tau^2 + \cosh(\tau)^2 g_{\partial N_g}(x,dx).
\end{equation}
Let us apply the construction of \S\ref{sec:metric-extension}. For $C_0>0$ large enough, we can ensure that $C_0 g_{\partial N_g}> g_{\partial M}$, where $g_{\partial M}$ is the metric on $\Sigma\simeq \partial M$. We can thus build an extension $M\subset N$ according to Lemma \ref{lemma:extension}. In $N\setminus M$, for $1\leq t\leq 2$, the metric takes the form (for some $\kappa>0$)
\[
dt^2 + \frac{1}{\kappa^2}\cosh(\kappa t)^2 g_{\partial N_g}(x,dx). 
\] 
In the formula above we recognize the expression in coordinates of the metric $h_0/\kappa^2$ (here, $\tau = \kappa t$). If the local coordinates in \eqref{eq:near-boundary-N_g} extend as far as $\tau \leq 2\kappa$, we can thus glue $N_{\{t< 2\}}$ with $N_g\setminus \Sigma\times[0,2\kappa[$, and this will conclude the proof of Theorem \ref{thm:compact-embedding-geometry} setting 
\[
N' = N_{\{t<2\}}\cup (N_g\setminus \Sigma\times [0,2\kappa[).
\]
The difficulty here is that for the dynamical arguments of \cite{Chen-Erchenko-Gogolev} to apply, we need to be able to take $\kappa$ arbitrarily large. We will thus be done if we can prove
\begin{lemma}\label{lemma:good-N_g}
For any $\kappa>0$, we can choose $N_g$ such that the $2\kappa$ neighbourhood of $\partial N_g$ is diffeomorphic to $\partial N_g \times [0,2\kappa[$. 
\end{lemma}

The argument of the proof was communicated by C. Lecuire. It relies on the following very fine statement from the topology of hyperbolic manifolds
\begin{theorem}[Theorem 9.2, \cite{Agol2012}]
Fundamental groups of compact 3 dimensional hyperbolic manifolds are LERF (locally extended residually finite).
\end{theorem} 

This means that whenever $H\subset \pi_1(M)$ is finitely generated, and $\gamma\in \pi_1(M)\setminus H$, there exists a finite index subgroup $\Gamma\subset\pi_1(M)$ such that $H\subset \Gamma$ and $\gamma\notin \Gamma$.

Let us come back to 
\begin{proof}[Proof of Lemma \ref{lemma:good-N_g}]
We will build $N_g$ by induction, so we start by denoting $N_g^0$ the one provided by Lemma \ref{lemma:Fujii}. Let us now consider geodesics of $N_g^0$ with endpoints in its boundary. We will say that two such geodesics are boundary-homotopic, if there is a free homotopy between them, so that at each time, the endpoints remain in the boundary. Working on the universal cover, it is elementary to observe that any such geodesic is either boundary homotopic to a point in the boundary, or to a geodesic that intersects the boundary orthogonally. Additionally, such a boundary-orthogonal geodesic uniquely minimizes the length among its boundary-homotopy class. Let us set
\[
\mathfrak{G}(N_g^0)= \{ a \ |\  \text{some boundary-orthogonal geodesic has length $a$.} \}
\]
Certainly, $\mathfrak{G}(N_g^0)$ is a discrete set, and its only accumulation point is $+\infty$. We can enumerate it as
\[
\mathfrak{G}(N_g^0)= \{ 2 \ell_j \ |\ j\geq 0\}. 
\]

With a bit more of elementary Riemannian geometry, we find that $\ell_0$, the largest $\ell$ such that the $\ell$-neighbourhood of $\Sigma$ is product, is exactly half the length of a shortest boundary-orthogonal geodesic (there may be several non-boundary homotopic orthogonal geodesics with the same length).

Let us denote $m_0>0$ the number of boundary-orthogonal geodesics with length $2\ell_0$, and $\gamma_0$ one of them. Let us now consider the double $A_0 = DN_g^0$ of $N_g^0$ along $\Sigma$, and $D\gamma_0$ the double of $\gamma_0$, a closed geodesic of length $4\ell_0$. Using the LERF theorem, we find a finite index subgroup $\Gamma\subset \pi_1(A_0)$ so that $\pi_1(\Sigma) \subset \Gamma$, and $[D\gamma_0]\notin \Gamma$. Let us denote $B_0 = \Hh^3/\Gamma$ the corresponding cover of $A_0$. 

Removing from $B_0$ all the lifts of $\Sigma$, we obtain a disjoint union of $T_j$'s, so that each $T_j$ is a finite cover of $N_g^0$. At least one of them has more than one boundary component, otherwise we would have $B_0 = A_0$. For every $T_j$, we can consider the lifts of $\gamma_0$ to $T_j$, and in particular the ones that have endpoints in different boundary components. If there is no such lift, $D\gamma_0$ would be lifted to $B_0$ as a closed geodesic, which is a contradiction, so that we can find at least one $T_{j_0}$, and one lift as desired, which starts from some boundary component $\Sigma_0 \simeq \Sigma$. Let us consider $N_g^{0,1}$ the manifold obtained by glueing to $T_{j_0}$ copies of $N_g^0$ along the boundary components different from $\Sigma_0$. 

\begin{figure}[h]
\def\svgwidth{0.7\columnwidth}
\begingroup%
  \makeatletter%
  \providecommand\color[2][]{%
    \errmessage{(Inkscape) Color is used for the text in Inkscape, but the package 'color.sty' is not loaded}%
    \renewcommand\color[2][]{}%
  }%
  \providecommand\transparent[1]{%
    \errmessage{(Inkscape) Transparency is used (non-zero) for the text in Inkscape, but the package 'transparent.sty' is not loaded}%
    \renewcommand\transparent[1]{}%
  }%
  \providecommand\rotatebox[2]{#2}%
  \newcommand*\fsize{\dimexpr\f@size pt\relax}%
  \newcommand*\lineheight[1]{\fontsize{\fsize}{#1\fsize}\selectfont}%
  \ifx\svgwidth\undefined%
    \setlength{\unitlength}{381.25643683bp}%
    \ifx\svgscale\undefined%
      \relax%
    \else%
      \setlength{\unitlength}{\unitlength * \real{\svgscale}}%
    \fi%
  \else%
    \setlength{\unitlength}{\svgwidth}%
  \fi%
  \global\let\svgwidth\undefined%
  \global\let\svgscale\undefined%
  \makeatother%
  \begin{picture}(1,0.54431838)%
    \lineheight{1}%
    \setlength\tabcolsep{0pt}%
    \put(0,0){\includegraphics[width=\unitlength,page=1]{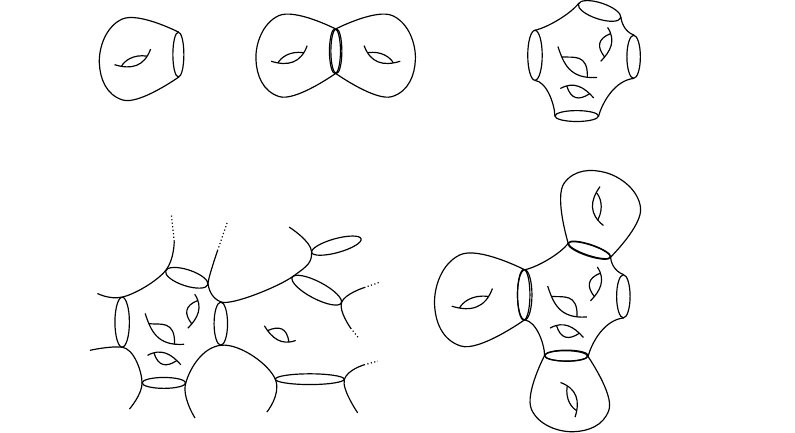}}%
    \put(-0.00217721,0.4672829){\color[rgb]{0,0,0}\makebox(0,0)[lt]{\lineheight{1.25}\smash{\begin{tabular}[t]{l}$N_g^0$\end{tabular}}}}%
    \put(0.38865061,0.38174585){\color[rgb]{0,0,0}\makebox(0,0)[lt]{\lineheight{1.25}\smash{\begin{tabular}[t]{l}$A_0$\end{tabular}}}}%
    \put(0.85129839,0.46021649){\color[rgb]{0,0,0}\makebox(0,0)[lt]{\lineheight{1.25}\smash{\begin{tabular}[t]{l}$T_{j_0}$\end{tabular}}}}%
    \put(0.01848289,0.13513524){\color[rgb]{0,0,0}\makebox(0,0)[lt]{\lineheight{1.25}\smash{\begin{tabular}[t]{l}$B_0$\end{tabular}}}}%
    \put(0.82235226,0.15997178){\color[rgb]{0,0,0}\makebox(0,0)[lt]{\lineheight{1.25}\smash{\begin{tabular}[t]{l}$N_g^{0,1}$\end{tabular}}}}%
  \end{picture}%
\endgroup%

\caption{Construction of $N_g^{0,1}$.}
\end{figure}

If $\gamma$ is a geodesic of $N_g^{0,1}$, with endpoints in the boundary, and orthogonal to the boundary, there is a dichotomy. Either it remains inside $T_{j_0}$, and has a length in $\mathfrak{G}(N_g^0)$, because $T_{j_0}$ is a cover of $N_g^0$. In this case, it cannot be a lift of $\gamma_0$ (because of our choice of $T_{j_0}$). Or it must reach the other boundary components of $T_{j_0}$. In this case, its length must thus be at least $4\ell_0$. 

We deduce that $N_g^{0,1}$ satisfies the same interesting properties as $N_g^0$ (i.e is hyperbolic, and has totally geodesic boundary isometric to $\Sigma$), and
\[
\mathfrak{G}(N_g^{0,1}) \subset \mathfrak{G}(N_g^0)\cup [4\ell_0,+\infty[.
\]
Additionally, there are at most $m_0-1$ geodesics orthogonal to its boundary with length exactly $2\ell_0$.

By finite descent, we build thus a manifold $N_g^1$ with the same interesting properties as $N_g^0$, such that
\[
\mathfrak{G}(N_g^{1}) \subset (\mathfrak{G}(N_g^0)\setminus\{2\ell_0\})\cup [4\ell_0,+\infty[.
\]
Repeating the argument for $\ell_1$ (if $\ell_1<4\ell_0$), we build in a finite number of steps a manifold $N_g^2$ with same interesting properties as $N_g^0$, such that 
\[
\mathfrak{G}(N_g^{2}) \subset [4\ell_0,+\infty[.
\]
By finite induction, we finally obtain a manifold $N_g^3$ with same interesting properties as $N_g^0$ such that 
\[
\mathfrak{G}(N_g^{3}) \subset ]4\kappa,+\infty[.
\]
This is what we wanted to find.
\end{proof}
This closes the proof of Theorem \ref{thm:compact-embedding-geometry}
\end{proof}

Observe that there are examples of compact hyperbolic 4-manifolds whose $\pi_1$ is not LERF. One of the incarnations of the fact that our problem here becomes much more difficult when the dimension increases. 

Let us prove Corollary \ref{cor:1}:
\begin{proof}[Proof of Corollary \ref{cor:1}]
Let $(M,g)$ be an oriented Anosov 3-manifold with boundary. Then we have seen that $M$ can be embedded as a convex open set of a compact manifold $(M',g)$, with Anosov geodesic flow. According to the geometrization theorem, since $M'$ is aspherical, it must be irreducible. Additionally, there can be no $\Z^2$ in its fundamental group, so that it is actually locally homogeneous, and it must be a hyperbolic manifold, with metric $\tilde{g}$. We deduce that $M$ supports a metric with constant curvature $-1$, and $\Gamma=\pi_1(M)\subset PSL_2(\C)$.

On the other hand, we can find a conjugation between the geodesic flows. Working first on the universal cover, given a vector $(x,v)\in S^\ast \Hh^3$, we can consider the endpoint $p_{\pm}$ of the geodesic through $(x,v)$ for the metric $g$. Then if ${\gamma}$ is the geodesic for $\tilde{g}$ with the same endpoints, and $H$ the horosphere based at $p_+$ for the metric $\tilde{g}$, through $x$. Then $H$ and $\gamma$ intersect at exactly one point $\gamma(t)$, and we set $h(x,v) = (\gamma(t),\gamma'(t))$. This map is uniformly H\"older, and equivariant under the action of $\pi_1(M')$, so it defines a continuous map of $S^\ast (\Hh^3/\Gamma)$ to itself, mapping closed geodesics of $g$ to closed geodesics of $\tilde{g}$. 

Let $\Lambda$ be the limit set of $\Gamma$, and let $F$ be the union of geodesics for $g$, with endpoints in $\Lambda$. By definition, $F/\Gamma$ is contained in $M$ and closed, so it is compact. Now, denote by $F_0$ the union of geodesics for the hyperbolic metric on $\Hh^3$ with endpoints in $\Lambda$. Then using the map $h$, we see that the compactness of $F$ implies the compactness of $F_0$, so that $\Gamma$ must be convex-cocompact. 
\end{proof}

Let us conclude the proof of the main theorem \ref{thm:3D}. 
\begin{proof}[Proof of Theorem \ref{thm:3D}]
We have already embedded isometrically $M\subset N'$, so that the curvature of $N'$ satisfies
\begin{enumerate}
	\item $K \leq C_0$ globally,
	\item $K \leq - \kappa^2 /2$ on $N'\setminus V$, where $V$ is the $1/\sqrt{\kappa}$ neighbourhood of $M$. 
\end{enumerate}
We have seen that $C_0$ is fixed, and $\kappa$ can be taken arbitrarily large. We have also ensured that the slices are uniformly strictly convex. In the arguments below, $C>0$ will denote a constant that does not depend on the choice $\kappa$, and that may change at every line. Except from this, we will use the notations for Jacobi fields introduced in \cite{Chen-Erchenko-Gogolev}. Let us just recall that if $J$ is a Jacobi field, 
\[
\mu_J = \frac{1}{2}\frac{(\| J \|^2)'}{\|J\|^2}.
\]
We also denote by $\partial_{\pm} SM$ the set of vectors in $SM$ above $\partial M$ that are entering (-) and outgoing (+). Also denote by $\Gamma_{\pm}$ the set of entering (-) (resp. outgoing (+)) vectors that are trapped in $M$ for all positive (resp. negative) time. 

Observe that the constant $1/C_1$ in \cite{Chen-Erchenko-Gogolev} corresponds to our $1/\sqrt{\kappa}$. The key technical tool is the comparison lemma 2.8 in \cite{Chen-Erchenko-Gogolev}. In particular, it tells us that in $N'\setminus V$, in the region $t>1/\sqrt{\kappa}$, where the curvature is below $-\kappa^2/2$, a Jacobi field with $\mu_J(0)> - Q$ satisfies $\mu_J(t)>(1-\epsilon)\kappa/\sqrt{2}$ for $t\gtrsim 1$, when $\kappa$ is large and $Q$ remains fixed.

We will rely on their arguments of \S8, trying to give some detail. Let us first sketch the proof of
\begin{lemma}
Let $v\in \partial_- SM$ and $J$ be a perpendicular Jacobi field along the corresponding geodesic. If $\mu_J(0)>Q_M$, then $J$ does not vanish in positive time. 
\end{lemma}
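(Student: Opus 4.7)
The plan is to split on whether the forward orbit of $v$ is trapped in $M$ or eventually exits, choosing $Q_M$ just above $\sup\mu^{s}$, where $\mu^{s}$ denotes the logarithmic derivative of stable Jacobi fields along the hyperbolic trapped set of $M$ (extended by continuity to a neighbourhood in $SM$).

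Trapped case $v\in\Gamma_-$: the orbit remains in $M$ and accumulates on the trapped set. Hyperbolicity together with the absence of conjugate points produces continuous stable/unstable Jacobi subbundles with uniformly bounded $\mu^s,\mu^u$. The Riccati equation for $\mu_J$ combined with the upper bound on $\mu^s$ makes the cone $\{\mu_J>Q_M\}$ forward-invariant along orbits that stay near the trapped set, so $\mu_J(t)>Q_M$ for all $t>0$ and $J$ never vanishes.

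Escape case $v\notin\Gamma_-$: let $\tau=\tau(v)<\infty$ be the first exit time from $M$. I first claim that $\mu_J(t)\geq -Q$ for all $t\in[0,\tau]$, with $Q$ independent of $v$. This is obtained by chaining the cone-invariance argument above (whenever the orbit lies in a small neighbourhood of the trapped set) with the standard Riccati decay estimate $\mu_J'\geq -\mu_J^2-C_0$ on the complementary segments, which have uniformly bounded length by hyperbolicity. After exit, the uniform strict convexity of the slices ensures a bounded-time crossing of the $1/\sqrt\kappa$-collar with controlled $\mu_J$-decay, so the orbit reaches the region $\{K<-\kappa^2/2\}$ still with $\mu_J>-Q'$. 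Comparison lemma~2.8 of \cite{Chen-Erchenko-Gogolev} then forces $\mu_J\gtrsim\kappa/\sqrt 2$ after unit further distance, for $\kappa$ large. Taking $\kappa$ so large that $\kappa/\sqrt 2>Q_M$, if the orbit re-enters $M$ it satisfies the initial hypothesis of the lemma again and the argument iterates; if it stays out in the extension, the curvature remains uniformly bounded below $-\kappa^2/2$ and the Riccati equation prevents $\mu_J$ from diverging to $-\infty$.

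The main obstacle I expect is the uniform lower bound $\mu_J\geq -Q$ throughout the $M$-traverse in the escape case. It requires a careful decomposition of the traverse into trapped-set-neighbourhood segments (controlled by cone invariance) and complementary segments of uniformly bounded length (controlled by standard Riccati decay), together with a choice of $Q_M$ that is simultaneously compatible with the three thresholds appearing in the argument: cone invariance on the trapped set, Riccati decay away from it, and the entry hypothesis of the comparison lemma in the $-\kappa^2/2$ region.
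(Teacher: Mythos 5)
Your proposal follows essentially the same route as the paper: handle the trapped case by hyperbolicity, and in the escaping case propagate a uniform lower bound on $\mu_J$ through the traverse of $M$, lose only $O(Q_M)$ in the thin collar (bounded curvature, crossing time $O(1/\sqrt{\kappa})$), recover $\mu_J\geq Q_M$ in the region $\{K\leq-\kappa^2/2\}$ via the comparison lemma 2.8 of \cite{Chen-Erchenko-Gogolev} using that the travel time there is at least $1$, and iterate over successive returns. The one real difference is that the paper does not reprove the uniform traverse estimate (entry with $\mu_J>Q_M$ implies exit with $\mu_J>-Q_M$, or unbounded growth if trapped): it is quoted wholesale as Proposition 4.1 of \cite{Chen-Erchenko-Gogolev}, whereas you attempt to rederive it by decomposing the orbit into segments near the trapped set (cone invariance) and complementary segments of bounded length (Riccati decay). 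As you yourself flag, that sketch is not complete: after a bounded excursion away from the trapped set, $\mu_J$ may have left the invariant cone, and re-entry into it is not automatic, which is exactly the delicate point Proposition 4.1 resolves (using the no-conjugate-points hypothesis and the structure of $\Gamma_\pm$). Citing that proposition, as the paper does, closes the gap; everything else in your argument is sound.
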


\begin{proof}
According to Lemma 8.5 in \cite{Chen-Erchenko-Gogolev}, the time travel in the region $t\in [0,1/\sqrt{\kappa}]$ is bounded above by $C/\sqrt{\kappa}$. If $J$ a perpendicular Jacobi field along a geodesic $\gamma_v$ starting at a point $v\in SN'$, let us assume that $v\in \partial SM$ is entering. Then according to Proposition 4.1 of \cite{Chen-Erchenko-Gogolev}, if $\mu_J(0) > Q_M$, either $v\in\Gamma_-$ and we are done, or $\gamma\notin \Gamma_-$. Then as $\gamma_v$ leaves $SM$, $\mu_J(\ell(v)) > - Q_M$. Since the travel time in the collar is small and the curvature bounded above, we deduce that $\mu_J$ is at least $-2Q_M$ when $\gamma_v$ enters $\{t>1/\sqrt{\kappa}\}$. Now, in this region, the curvature is bounded above by $-\kappa^2/2$, so that applying the comparison lemma 2.8 of \cite{Chen-Erchenko-Gogolev}, and taking into account that the travel time inside $t>1/\sqrt{\kappa}$ must be at least $1$, if $\kappa$ is large enough, $\mu_J$ must be at least $Q_M$ again when $\gamma_v$ enters again $SM$ (if it ever does). We can conclude by induction on the times. 
\end{proof}

\begin{lemma}
$N'$ has no conjugate points
\end{lemma}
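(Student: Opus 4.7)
Let $\gamma:\R\to N'$ be a geodesic and $J$ a non-trivial perpendicular Jacobi field along it with $J(t_0)=0$. We must show $J(t)\neq 0$ for all $t\neq t_0$; by time reversal it suffices to treat $t>t_0$. Since $\mu_J(t)\to+\infty$ as $t\to t_0^+$, for any threshold $Q$ one can choose $\delta>0$ with $\mu_J(t_0+\delta)>Q$, so we may assume that $\mu_J$ is arbitrarily large at $t_0+\delta$.

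The strategy is to run exactly the cyclic bookkeeping of the previous lemma, but started from an arbitrary point of $N'$ rather than from $\partial_- SM$. Split $N'$ into the three regions $M$, the collar $V\setminus M$ of width $1/\sqrt\kappa$, and the bulk $N'\setminus V$ where the curvature is $\leq -\kappa^2/2$. On each piece the evolution of $\mu_J$ is controlled uniformly: inside $M$, the no-conjugate-point assumption on $M$ combined with Proposition~4.1 of \cite{Chen-Erchenko-Gogolev} shows that when $\gamma$ enters $M$ with $\mu_J>Q_M$ then $J$ does not vanish and $\mu_J>-Q_M$ at the exit; inside $V\setminus M$ the travel time is $\leq C/\sqrt\kappa$ by Lemma~8.5 of \cite{Chen-Erchenko-Gogolev}, and the scalar Riccati inequality with curvature bounded above by $C_0$ only allows $\mu_J$ to decrease by $O(1)$; inside $N'\setminus V$ the comparison Lemma~2.8 of \cite{Chen-Erchenko-Gogolev} drives $\mu_J$ back above $Q_M$ once the residence time exceeds a universal constant.

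I would first dispatch the initial segment based on the position of $\gamma(t_0+\delta)$. If it lies in $M$, the arbitrarily large value of $\mu_J$ just after the zero lets one invoke the internal analogue of Proposition~4.1, yielding non-vanishing of $J$ inside $M$ and $\mu_J>-Q_M$ at the first exit. If it lies in $V\setminus M$, the brief travel through the collar preserves the huge value of $\mu_J$ before the geodesic enters $M$ or $N'\setminus V$. If it lies in $N'\setminus V$, the negative curvature only pushes $\mu_J$ further up until the geodesic returns to $V$. In each case, after a short initial phase the geodesic is in the cyclic regime of the previous lemma, and an induction on successive entries into $M$ shows that $\mu_J$ stays bounded below (by $-2Q_M$, say) for all $t>t_0$, so that $J$ never vanishes again.

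The main obstacle is verifying the hypothesis, used in the third step of the cycle, that the residence time of $\gamma$ in $N'\setminus V$ between two consecutive visits to $V$ exceeds a fixed constant; without it Lemma~2.8 cannot pump $\mu_J$ back up to $Q_M$. This is secured by the uniform strict convexity of the equidistant slices from Lemma~\ref{lemma:extension}(1), which prevents a geodesic from skimming $N'\setminus V$ and returning to $V$ in a vanishingly small time. The degenerate scenarios --- geodesics that stay in $M$ for all $t>t_0$, or are asymptotic to the trapped set --- fall directly under the no-conjugate-point hypothesis on $M$.
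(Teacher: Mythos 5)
Your proposal is correct and follows essentially the same route as the paper: a case analysis on the location of the zero of $J$, region-by-region control of $\mu_J$ via Proposition 4.1, Lemma 2.8 and Lemma 8.5 of \cite{Chen-Erchenko-Gogolev}, and an induction on successive crossings of $\partial M$ that reduces everything to the previous lemma. The only cosmetic differences are that where you invoke an ``internal analogue'' of Proposition 4.1 at a zero inside $M$, the paper gets the exit bound $\mu_J>-Q_M$ by contradiction with the time-reversed Proposition 4.1, and your explicit justification of the lower bound on the residence time in $N'\setminus V$ (via convexity of the slices, so a geodesic cannot turn around inside the collar) is a point the paper leaves implicit.
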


\begin{proof}
Let us consider a nonzero Jacobi field $J$ that vanishes at some point above $N'\setminus M$. Then, using again the comparison lemma, we deduce that $\mu_J> Q_M$ as long as the geodesic remains in $SN'\setminus SM$, and thus cannot vanish again according to our lemma.

Let us now consider a nonzero Jacobi field vanishing at some point. If the corresponding geodesic remains in $SM$, then we are done. Otherwise consider the first time that it exits $SM$. Then, using the argument of the proof of Lemma 8.11 of \cite{Chen-Erchenko-Gogolev}, we must have $\mu_J> - Q_M$. Otherwise we could apply Proposition 4.1 in reversed time, and obtain a contradiction. Again by the smallness of the collar and the very negative curvature in $\{t>1/\sqrt{\kappa}\}$, if the geodesics come back again in $SM$, we must have $\mu_J> Q_M$ at the point of entry. Then proceeding by induction as above enables us to conclude. 
\end{proof}

Finally, we have to prove that the geodesic flow is Anosov. For this it suffices, according to Eberlein's theorem, to prove that no nonzero Jacobi field can be globally bounded. Since $M$ has axiom A geodesic flow and the curvature is very negative in $\{t>1/\sqrt{\kappa}\}$, this is a given for any Jacobi field along a geodesic that remains either in $SM$ or $SN'\setminus SM$ for all times.

Consider a geodesic $\gamma$ entering $SM$ at a point $v$, with a perpendicular Jacobi field $J$ satisfying $\mu_J> Q_M$ at that point. Then according to Proposition 4.1 of \cite{Chen-Erchenko-Gogolev}, we have either $\|J\|\to+\infty$, or $\int_{SM} \mu_J > -C_0$ before $\gamma$ exits $SM$. Now, before the geodesic enters again (if it ever does) in $SM$, since the curvature is so negative, using the comparison lemma again, we must have 
\[
\int_{SN\setminus SM} \mu_J\geq (1-\epsilon) \frac{\kappa}{\sqrt{2}} - \epsilon,
\]
where $\epsilon$ tends to $0$ as $\kappa$ grows large. Here we have taken into account that the travel time must be at least $1$ above $N'\setminus M$. We also used Proposition 4.1, ensuring that $\mu_J>-Q_M$ entering $SN'\setminus SM$. 

Now, either the sequence of times when the geodesic changes component is finite, and we know directly by the comparison lemma and Proposition 4.1 that $|J|\to+\infty$, or the sequence is infinite. In that case, we observe that between two consecutive times the geodesic entered $SM$, we have
\[
\int \mu_J > \frac{1}{10}\kappa - C_0 > 1, 
\]
so that $\|J\|\to+\infty$ also in positive time. 

Let us now consider the case that as $\gamma$ enters $SM$, $\mu_J \leq Q_M$. Then, we reverse time, and see this as a geodesic entering $SN'\setminus SM$ with $\mu_J \geq - Q_M$. Then the same argument as above apply, only in negative time.
\end{proof}

\bibliographystyle{alpha}
\bibliography{biblio}
\end{document}